\documentclass[11pt]{article}
\usepackage{geometry}
              % See geometry.pdf to learn the
%layout options. There are lots.
\geometry{letterpaper}
                 % ... or a4paper or a5paper

%or ...

%\geometry{landscape}
               % Activate for for rotated page

%geometry
%
\usepackage[parfill]{parskip}
   % Activate to begin paragraphs with

%an empty line rather than an indent

\usepackage{graphicx}

\usepackage{amssymb}

\usepackage{epstopdf}

\DeclareGraphicsRule{.tif}{png}{.png}{`convert #1 `dirname #1`/
`basename #1 .tif`.png}
\usepackage{amsmath,amsthm,amscd,amssymb}
\usepackage{latexsym}
\numberwithin{equation}{section}

\theoremstyle{plain}
\newtheorem{theorem}{Theorem}[section]
\newtheorem{lemma}[theorem]{Lemma}

\newtheorem{proposition}[theorem]{Proposition}

\theoremstyle{definition}
\newtheorem{definition}[theorem]{Definition}

\theoremstyle{remark}

\newtheorem{claim}[theorem]{Claim}

\newtheorem{case[theorem]}{Case}

% MICHAEL'S MACROS

\newcommand{\fn}{F _3 ^N}
\newcommand{\beqa}{\begin{eqnarray*}}
\newcommand{\eeqa}{\end{eqnarray*}}
\newcommand{\beqan}{\begin{eqnarray}}
\newcommand{\eeqan}{\end{eqnarray}}
\newcommand{\one }{\mathbf 1 }

\newcommand{\calf}{\mathcal F}
\newcommand{\calp}{\mathcal P}
\newcommand{\calq}{\mathcal Q}

\title{Structure in additively nonsmoothing sets}
\author{Michael Bateman and Nets Hawk Katz}

\date{}

\begin{document}

\maketitle

\begin{abstract}
Sets with many additive quadruples are guaranteed to have many additive octuples, by H\"{o}lder's inequality.  Sets with not many more than this are said to be additively nonsmoothing.  We give a new proof of a structural theorem for nonsmoothing sets that originally appeared in work of the authors (\cite{BK}) on the size of cap sets in $\fn $.
\end{abstract}

\section{Introduction}
In this paper we reprove a structural theorem from \cite{BK} for sets that are not \it additively smoothing\rm.  The notion of additive smoothing was introduced in a recent paper by the authors, where the spectra of large cap sets (i.e., sets in $\fn$ without any lines) are shown to be additively nonsmoothing.  See \cite{BK}.
We begin by reviewing several definitions, including that of additively smoothing.  
The setting for this paper is an abelian group $Z$.  
\begin{definition}
For a set $A\subseteq Z $, and $m=1, 2,3, \dots$, we define the \it additive energies \rm of $A$ by  
\beqa
E_{2m} (A) = | \{ (a_1, \dots , a_{2m} ) \in A^{2m} \colon a_1 + \dots + a_m = a_{m+1} + \dots  + a_{2m} \} |.
\eeqa
\end{definition}
The quantity $E_4 (A)$ is typically called the additive energy of $A$.  The importance of the higher order energies is made clear in 
\cite{BK}, although the theorem here uses only $E_4$ and $E_8$.  
\begin{definition}
We say a set $A$ is $\sigma$-smoothing if 
\beqa
E_8 (A) \sim |A|^{\sigma} { E_4 (A) ^3 \over |A| ^2 }.
\eeqa
\end{definition}
When we casually write that a set is ``nonsmoothing", we mean that it is $\sigma$-smoothing for a small value of $\sigma$; so for example, a set with exactly $E_8 (A) = { E_4 (A) ^3 \over |A| ^2 }$ is $0$-smoothing.  This definition measures the sharpness of the first inequality in Proposition \ref{holder} below.  We state the main theorem already, but encourage readers unfamiliar with the notion of additive smoothing to skip to Section \ref{examplesection} for some examples.
In this paper we prove the following structural theorem about sets with minimal additive smoothing.  It essentially appeared in 
\cite{BK} as Theorem 6.10.  The significant new ingredient here is the notion of sideways comity, which allows us to avoid some of the technicalities in the proof in \cite{BK}.  On the other hand, the function $f$ here gives much worse dependence on $\sigma$ than the function $f$ from \cite{BK}. 

%For our application to the spectrum of a cap set, the reader should have in mind that $\tau = 1 - O (\epsilon) $ and $\sigma = O(\epsilon)$.

\begin{theorem} \label{main}
Fix $\tau _0 >0$.
There exists a function $f_{\tau _0} \colon (0, 1 ) \rightarrow (0, \infty) $ with $f_{\tau _0}(\eta ) \rightarrow 0$ as $\eta \rightarrow 0 $ such that the following holds.
Let $\Delta \subseteq Z$ be a symmetric set (i.e., $\Delta = -\Delta$) of size $M$.  Let $\sigma_0 >0 $.  Assume that $E_4 (\Delta ' ) \sim M^{2+ \tau_0 }$ for every 
$\Delta ' \subseteq \Delta $ with $|\Delta '| \gtrsim |\Delta |$, and that $\Delta $ is at most $\sigma _0$-smoothing, i.e., 
$E_8 (\Delta ) \lesssim M^{4 + 3 \tau_0 + \sigma_0}$.  Then there exists $\alpha \geq 0$ such that for 
$j = 1, 2, \dots, M^{\alpha - f_{\tau _0}(\sigma)} $, we have sets $H_j \subseteq Z$, sets  $X _j \subseteq Z$,  and $B _j \subseteq \Delta $ such that 
%\beqa 
%|B| \gtrsim M^{3 - \alpha - f(\sigma)} ,
%\eeqa
\beqa
|H_j| \lesssim M^{\tau + \alpha + f_{\tau _0}(\sigma _0)},
\eeqa
\beqa
|X_j| \lesssim M^{ 1-\tau - 2 \alpha + f_{\tau _0}(\sigma _0)},
\eeqa
\beqa
|H_j-H_j| \lesssim |H_j|^{1+ f_{\tau _0}(\sigma _0)},
\eeqa
such that 
\beqa
|(X_j+H_j) \cap B_j | \gtrsim M^{ 1 - \alpha  - f_{\tau _0}(\sigma _0) },
\eeqa
and such that $B_k \cap B_j = \emptyset $ unless $k=j$.
\end{theorem}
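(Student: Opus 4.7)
The plan is to perform a weighted dyadic decomposition of the additive energy, apply a Balog--Szemer\'edi--Gowers--Pl\"unnecke style argument to extract an approximate group at each popular column, and then use sideways comity to disjointify the resulting bushes.

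First I would dyadically decompose the representation function $r(x) = |\Delta \cap (\Delta + x)|$ into level sets. The hypothesis $E_4(\Delta') \sim M^{2+\tau_0}$ for all large $\Delta' \subseteq \Delta$, combined with the upper bound $E_8(\Delta) \lesssim M^{4 + 3 \tau_0 + \sigma_0}$, should force a popular level $\lambda \sim M^{\tau_0 + \alpha}$ (for some $\alpha \geq 0$) whose level set $S_\lambda = \{x : r(x) \sim \lambda\}$ has size roughly $M^{2 - \tau_0 - 2\alpha}$; this dominant level must saturate the H\"older inequality $E_8 \geq E_4^3 / M^2$ up to a factor $M^{O(\sigma_0)}$. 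Tracking only the dominant level is allowed precisely because the uniform $E_4$ hypothesis prevents the mass from hiding in a sparse subset.

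Next I would extract additive structure from the columns $C_d := \Delta \cap (\Delta + d)$, $d \in S_\lambda$, each of size about $\lambda$. Interpreting the near-saturation of H\"older as a near-equality in Cauchy--Schwarz on pairs $(d_1,d_2)$, a large refinement of $S_\lambda \times S_\lambda$ must satisfy $|C_{d_1} \cap C_{d_2}| \approx \lambda^2 / M$. Feeding this into a BSG/Pl\"unnecke argument (applied either inside a single column or to the shift family itself) yields, for a typical anchor $d_0 \in S_\lambda$, a set $H$ with $|H| \sim M^{\tau_0 + \alpha}$ and $|H - H| \lesssim |H|^{1 + f_{\tau_0}(\sigma_0)}$, together with a base set $X$ of size about $M^{1 - \tau_0 - 2\alpha}$, such that $X + H$ covers a large portion of $C_{d_0}$ inside $\Delta$. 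This produces a single candidate bush $B \subset \Delta$ of size $\gtrsim M^{1 - \alpha - f_{\tau_0}(\sigma_0)}$.

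The main obstacle, and the place where the new ingredient enters, is producing \emph{disjoint} bushes $B_j$ for $j = 1, \dots, M^{\alpha - f_{\tau_0}(\sigma_0)}$. Naively iterating the previous step gives only overlapping bushes. The sideways comity $\swcom$ concept should be used here: two candidate bushes $(X_{j_1}+H_{j_1})$ and $(X_{j_2}+H_{j_2})$ that overlap substantially in $\Delta$ must share a large amount of common additive structure, and quantifying this sharing should produce additional additive octuples beyond what the nonsmoothing hypothesis $E_8 \lesssim M^{4 + 3\tau_0 + \sigma_0}$ permits. This will bound the multiplicity with which a typical point of $\Delta$ belongs to candidate bushes by $M^{f_{\tau_0}(\sigma_0)}$, and a greedy selection then produces the disjoint family $\{B_j\}$ of the required cardinality.

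The hard part is the sideways comity estimate itself --- verifying that overlapping bushes genuinely generate forbidden octuple configurations, rather than merely forbidden quadruples, so that the disjointification is controlled by $\sigma_0$ rather than $\tau_0$. The remaining work is bookkeeping: ensuring that the cumulative losses from pigeonholing, BSG refinement, Pl\"unnecke control on $|H - H|$, and the greedy disjointification all combine into a single subpolynomial factor $M^{f_{\tau_0}(\sigma_0)}$ with $f_{\tau_0}(\sigma_0) \to 0$ as $\sigma_0 \to 0$. Replacing the iterative comity-raising of \cite{BK} by this one-shot sideways comity estimate is what yields the cleaner, if quantitatively weaker, function $f_{\tau_0}$ advertised in the theorem.
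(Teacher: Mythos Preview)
Your proposal has the architecture inverted. In the paper, the reduction from Theorem~\ref{main} to the single-bush statement (Theorem~\ref{specstruct}) is essentially trivial: once you can find \emph{one} bush $B = (X+H)\cap\Delta$ of size $\gtrsim M^{1-\alpha-f(\sigma_0)}$, you simply delete it and repeat. The disjointness of the $B_j$ is immediate because each new bush is carved out of what remains, and the uniform hypothesis $E_4(\Delta')\sim M^{2+\tau_0}$ for every large $\Delta'\subseteq\Delta$ is precisely what guarantees that the leftover set still satisfies the hypotheses of the single-bush theorem. A final pigeonhole over the values of $\alpha_j$ aligns the heights. No comity, sideways or otherwise, is used in this step, and no octuple-counting argument is needed to control overlap.

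Conversely, the part you describe as routine BSG/Pl\"unnecke work is where all the content lies, and your account of it skips the actual mechanism. Near-saturation of H\"older gives you the comity parameter $\beta$ close to $\tau+\alpha$ (this is Lemma~\ref{comitytonight}, and it is the only place the $E_8$ bound is invoked), but comity alone is not enough to run asymmetric BSG with the correct set sizes. Sideways comity is the additional ingredient needed to show that a typical column $\Delta[x]$ has near-maximal energy against the translate family $F_{x,a}$, which is what feeds into Lemma~\ref{absg}; see Lemma~\ref{sideways}. Both comity and sideways comity are obtained by an iteration (Section~\ref{iterationsection}) in which failure to achieve the target parameter produces an additive structure of strictly lower height. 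So your claim that sideways comity replaces iteration by a one-shot estimate is backwards: it is itself obtained by iteration, and its role is in the single-bush extraction, not in disjointification.
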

We remark that as a consequence of the estimates on $|(X_j + H_j) \cap \Delta |$, we also have lower bounds on $|H_j|$, and 
$|X_j|$.  Further, by applying Freiman's theorem, one can conclude that the set $H$ is efficiently contained in a subspace or a coset progression, with the details depending on the specific setting $Z$.

We take a moment to state the asymmetric Balog-Szemeredi-Gowers theorem.  This will help us find subsets with good additive properties in sets with good comity and sideways comity.  (Both of these terms will be defined below.)  
\begin{lemma} \label{absg} Let $B,C \subset Z$ be such that there are at least $|B|^{1-\eta} |C|^2$ additive
quadruples of the form
$$b_1+c_1=b_2+c_2$$
with $b_1,b_2 \in B$ and $c_1,c_2 \in C$. Then there 
exists $\mu = \mu (\eta, {|B|\over |C|})$, with $\mu (\eta , {|B|\over |C|})  \rightarrow 0$ as $\eta \rightarrow 0$, and there exist 
 $K \subset Z$ and 
$X \subset Z$ with 
$$|X| \lesssim  |B|^{\mu} {|B| \over |C|},$$
so that
$$|B \cap (X+K)| \gtrsim |B|^{1-\mu},$$
\beqa
|K-K| \lesssim |K|^{1+ \mu} 
\eeqa
and there exists an element $x \in Z$ so that
$$|C \cap (x+K)| \gtrsim |C|^{1-\mu} .$$
In particular, the last inequality implies
$$|K| \gtrsim |C|^{1-\mu}.$$
\end{lemma}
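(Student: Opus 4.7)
The plan is to reduce to a Gowers-graph setup, apply the standard Balog--Szemer\'edi--Gowers graph argument to find large subsets $B' \subseteq B$ and $C' \subseteq C$ with small sumset, and then use Pl\"unnecke--Ruzsa together with Ruzsa's covering lemma to produce the required $K$ and $X$. Writing $r(s) := \#\{(b,c) \in B \times C : b+c = s\}$, the hypothesis says $\sum_s r(s)^2 \geq |B|^{1-\eta}|C|^2$. A dyadic pigeonhole extracts a popular-sum set $S \subseteq B+C$ and a multiplicity level $\tau$ with $r(s) \sim \tau$ on $S$ and $|S|\tau^2 \gtrsim |B|^{1-\eta}|C|^2$ up to polylog losses. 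Combining with the trivial $|S|\tau \leq |B||C|$ gives $\tau \gtrsim |C|/|B|^{\eta}$ and $|S| \lesssim |B|^{1+\eta}$. A further pigeonhole restricts to $B_0 \subseteq B$ and $C_0 \subseteq C$ of almost full density on which the bipartite graph $G = \{(b,c) : b+c \in S\}$ is approximately regular on both sides.

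Next I apply the Gowers 4-cycle argument: for most $(c,c') \in C_0^2$ many $b \in B_0$ satisfy $b+c, b+c' \in S$, so $c - c' = (b+c) - (b+c')$ has many representations as a difference in $S$, and symmetrically for $(b,b') \in B_0^2$. A careful count of 4-cycles in $G$ then produces $B' \subseteq B_0$, $C' \subseteq C_0$ of sizes $\gtrsim |B|^{1 - O(\eta)}$, $\gtrsim |C|^{1 - O(\eta)}$ respectively, satisfying the asymmetric sumset bound $|B' + C'| \lesssim |B|^{1+O(\eta)} =: K_0 \, |B'|$ with $K_0 = |B|^{O(\eta)}$. Set $K := C' - C'$. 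The Pl\"unnecke--Ruzsa inequality applied to $|B' + C'| \leq K_0|B'|$ gives $|2C' - 2C'| \leq K_0^4 |B'|$, so $|K| \geq |C'| \gtrsim |C|^{1 - O(\eta)}$ and $|K-K|/|K| \leq K_0^4 |B|/|C'| \lesssim (|B|/|C|) \cdot |B|^{O(\eta)}$, which one rewrites as $|K|^{\mu}$ for an appropriate $\mu = \mu(\eta, |B|/|C|) \to 0$ as $\eta \to 0$.

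Fixing $c_0 \in C'$ and setting $x := c_0$, the inclusion $C' \subseteq c_0 + K$ gives $|C \cap (x + K)| \geq |C'| \gtrsim |C|^{1-\mu}$ (in particular forcing $|K| \gtrsim |C|^{1-\mu}$). For the cover of $B$: Ruzsa's covering lemma, applied with $A = C'$ and $B = B'$ using the ratio $|B' + C'|/|C'| \leq K_0 |B'|/|C'| \lesssim (|B|/|C|) \cdot |B|^{O(\eta)}$, produces $X \subseteq B'$ of the same cardinality bound with $B' \subseteq X + C' - C' = X + K$. Hence $|B \cap (X + K)| \geq |B'| \gtrsim |B|^{1-\mu}$, as required. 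The main technical obstacle is the Gowers graph step producing the asymmetric sumset bound $|B' + C'| \lesssim |B|^{1+O(\eta)}$: the path-counting must be arranged to give $|B' + C'|$ close to $|B|$ itself, rather than the cruder $(|B||C|)^{1/2}$-type bound coming from the naive symmetric BSG, which requires tracking both $B$- and $C$-side degrees in the regularized graph $G$ and using the quantitative inputs $\tau \gtrsim |C|/|B|^\eta$ and $|S| \lesssim |B|^{1+\eta}$ from Step 1. Once this is in hand, the Pl\"unnecke--Ruzsa and Ruzsa-covering steps are essentially black-box applications.
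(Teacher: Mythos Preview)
The paper does not prove this lemma at all: immediately after the statement it simply writes ``See \cite{TV} Theorem 2.35 for a proof.'' So there is no argument in the paper to compare against; your sketch is already more than the paper provides, and its overall architecture (popular sums, Gowers path-counting to get $B',C'$ with $|B'+C'|\lesssim |B|^{1+O(\eta)}$, then Pl\"unnecke--Ruzsa and Ruzsa covering) is indeed the standard route that a proof along the lines of Tao--Vu would take.

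That said, there is one genuine soft spot in your outline, at the step where you take $K=C'-C'$ and claim the doubling bound. From $|B'+C'|\le K_0|B'|$ with $K_0=|B|^{O(\eta)}$, Pl\"unnecke--Ruzsa gives $|K-K|=|2C'-2C'|\le K_0^4|B'|$, while the only lower bound you have is $|K|\ge|C'|$. Hence your ratio bound is
\[
\frac{|K-K|}{|K|}\ \lesssim\ K_0^4\,\frac{|B'|}{|C'|}\ \sim\ \frac{|B|}{|C|}\cdot|B|^{O(\eta)}.
\]
You then assert this ``rewrites as $|K|^{\mu}$ for an appropriate $\mu=\mu(\eta,|B|/|C|)\to 0$.'' But with $|K|\gtrsim|C|^{1-O(\eta)}$ one needs roughly $\mu\gtrsim \dfrac{\log(|B|/|C|)}{\log|C|}+O(\eta)$, and in the regime relevant to this paper (where $|B|\sim M^{1-\alpha}$, $|C|\sim M^{\tau+\alpha}$, so $|B|/|C|\sim M^{1-\tau-2\alpha}$ is a genuine power of $M$) this forces $\mu\ge \frac{1-\tau-2\alpha}{\tau+\alpha}+O(\eta)$, which does \emph{not} tend to $0$ as $\eta\to 0$. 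In other words, $K=C'-C'$ with only the Pl\"unnecke bound does not deliver $|K-K|\le|K|^{1+\mu}$ with $\mu\to 0$ in the asymmetric case. One fix is to first show $C'$ itself has near-maximal energy (this can be extracted from the quadruple count, since for most $c_1,c_2\in C'$ the difference $c_1-c_2$ has $\gtrsim |B|^{1-O(\eta)}$ representations as $b_2-b_1$, forcing large $E(C',C')$) and then run BSG once more on $C'$ to obtain a subset with doubling $|C|^{O(\eta)}$; take that (or its difference set) as $K$. The covering of $B'$ by $X+K$ then still goes through via Ruzsa's covering lemma.
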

See \cite{TV} Theorem 2.35 for a proof.
%An only slightly stronger form of this lemma appears in the book of Tao and Vu as Lemma 2.35.  \cite{TV06}  (They account for more general $L$.)  
%Another way of stating this result which we will use frequently is to define a function $f$ with
%$\lim_{t \longrightarrow 0} f(t)=0$ and to let $\mu=f(\eta)$. We will use this kind of notation
%frequently in the paper with the choice of $f$ varying from line to line.

%
%This can be combined with Freiman's theorem to show that the set $K$ is contained in a subspace of dimension $N^{O(\mu)}$:
%
%\begin{theorem} \label{Freiman} A $\mu$ additively closed set is contained in a subspace of dimension
%$N^{O(\mu)}$. \end{theorem}

{\bf Acknowledgements}  The first author is supported by an NSF postdoctoral fellowship, DMS-0902490.  The second author
is partially supported by NSF grant DMS-1001607.

%%%%%%%%%%%%%%%%%%%%%%%%%%%%%%%%%%%%%%%%%%%%%%%%%%%%%%%%%%%%%%%%%%%%%%%%%%%%%%%%%%%

\section{Examples} \label{examplesection}

%%%%%%%%%%%%%%%%%%%%%%%%%%%%%%%%%%%%%%%%%%%%%%%%%%%%%%%%%%%%%%%%%%%%%%%%%%%%%%%%%%%

We give a quick corollary of H\"{o}lder's inequality that motivates the definition of additively nonsmoothing.
\begin{proposition} \label{holder}
If $Z$ is finite, then for any set $A$, we have 
\beqa
E_8 (A) \geq { E_4 (A) ^3 \over |A|^2 }.
\eeqa
Further, we have 
\beqa
E_8 (A) \leq |A|^4 E_4 (A).
\eeqa
\end{proposition}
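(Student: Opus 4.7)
The plan is to prove both inequalities via Fourier analysis on the finite abelian group $Z$. Let $f = \mathbf 1_A$ and let $\hat f$ denote its Fourier transform. Since the $k$-fold representation function $r_k(x) := |\{(a_1,\dots,a_k) \in A^k : a_1 + \cdots + a_k = x\}|$ is the $k$-fold convolution of $f$ with itself, Plancherel's theorem yields
$$ |A| = \frac{1}{|Z|}\sum_\xi |\hat f(\xi)|^2, \qquad E_4(A) = \frac{1}{|Z|}\sum_\xi |\hat f(\xi)|^4, \qquad E_8(A) = \frac{1}{|Z|}\sum_\xi |\hat f(\xi)|^8. $$

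For the lower bound I would introduce the auxiliary quantity $T := \frac{1}{|Z|}\sum_\xi |\hat f(\xi)|^6$ and apply Cauchy--Schwarz twice to the non-negative sequence $(|\hat f(\xi)|)_\xi$: pairing $|\hat f|$ with $|\hat f|^3$ gives $E_4(A)^2 \leq |A|\cdot T$, and pairing $|\hat f|^2$ with $|\hat f|^4$ gives $T^2 \leq E_4(A)\cdot E_8(A)$. Chaining these eliminates $T$ and produces $E_4(A)^3 \leq |A|^2 E_8(A)$. Equivalently, this is the log-convexity estimate $\|\hat f\|_4^3 \leq \|\hat f\|_2 \|\hat f\|_8^2$, i.e.\ H\"{o}lder's inequality in $\ell^p$ norms of $\hat f$, matching the proposition's preamble that the first bound is a corollary of H\"{o}lder.

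For the upper bound, the trivial pointwise estimate $\|\hat f\|_\infty \leq \|f\|_1 = |A|$ gives $|\hat f(\xi)|^8 \leq |A|^4 |\hat f(\xi)|^4$ for every $\xi$; summing in $\xi$ produces $E_8(A) \leq |A|^4 E_4(A)$. A Fourier-free alternative applies Cauchy--Schwarz to the convolution identity $r_4 = r_2 * r_2$, yielding $r_4(x)^2 \leq |A|^2 \sum_y r_2(y) r_2(x-y)^2$; summing over $x$ and using $\sum_y r_2(y) = |A|^2$ and $\sum_x r_2(x-y)^2 = E_4(A)$ gives the same estimate. There is no serious obstacle --- each step is a one-line Cauchy--Schwarz --- and the only real choice is to pass to Fourier space, where both energies become sums of powers of the single non-negative function $|\hat f|$ and the inequalities reduce to standard moment comparisons.
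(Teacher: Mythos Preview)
Your proof is correct and follows essentially the same approach as the paper: express $|A|$, $E_4(A)$, and $E_8(A)$ as moments of $|\hat{\mathbf 1_A}|$, then use H\"older (you factor it as two Cauchy--Schwarz steps, which is equivalent) for the lower bound and the pointwise estimate $\|\hat{\mathbf 1_A}\|_\infty \le |A|$ for the upper bound. The only cosmetic difference is your Fourier normalization, and your Fourier-free alternative for the upper bound is a nice bonus the paper does not include.
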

\begin{proof}
A straightforward calculation establishes the identity 
\beqa
E_{2m}(A) = |Z|^{2m-1}  \sum _{\xi \in Z} |\widehat{ \one _A } (\xi )|^{2m} 
\eeqa
for $m=1, 2, \dots $.  (Here $\widehat{\one _A}$ is the Fourier transform of $\one _A$, 
\beqa
\widehat{f} (\xi) = {1\over |Z|} \sum _{x \in Z} f(x) e^{2\pi i \langle \xi, x \rangle }
\eeqa
and $\langle \cdot , \cdot \cdot \rangle $ is a nondegenerate symmetric bilinear form.  See \cite{TV} Section 4.1 for details in our general setting.)
When $m=1$, this gives us 
\beqa
|A| = E_2 (A) = |Z| \sum _{\xi \in Z} |\widehat{ \one _A } (\xi )|^{2} 
\eeqa
which is just Plancherel's equality for the function $\one _A$.  H\"{o}lder's inequality yields 
\beqa
E_4 (A) & = & |Z|^3 \sum _{\xi \in Z} |\widehat{ \one _A } (\xi )|^{4}  \\
& \leq &  \left( |Z| \sum _{\xi \in Z} |\widehat{ \one _A } (\xi )|^{2} \right) ^{2\over 3} 
	  \left( |Z|^7 \sum _{\xi \in Z} |\widehat{ \one _A } (\xi )|^{8} \right) ^{1\over 3} \\
& = & |A| ^{2 \over 3} E_8 (A) ^{1\over 3}.
\eeqa
This proves the first claim.  To prove the second claim, just note that 
\beqa
E_8 (A) &=& |Z|^7 \sum_{\xi \in Z} |\widehat{\one _A} (\xi)|^8 \\
&\leq & |Z|^7 \sup_{\xi \in Z} |\widehat{\one _A} (\xi) |^4  \sum_{\xi \in Z} |\widehat{\one_A} (\xi)|^4 \\
& \leq &  |A|^4 |Z|^3 \sum_{\xi \in Z} |\widehat{\one _A} (\xi)|^4 \\
&=& |A|^4 E_4 (A),
\eeqa
since $|\widehat{\one _A}(\xi)| \leq {|A| \over |Z|}$ for any $\xi$.
\end{proof}
As examples of the two extremes, consider a ``random" set $A$ of size $N$ in a subgroup $H$ of size $N^{1 + \epsilon}$.  Given $a_1, a_2, a_3 \in A$, we know 
\beqa
a_1 + a_2 - a_3 \in H;
\eeqa 
further 
\beqa
a_1 + a_2 - a_3 \in A
\eeqa
with probability $N^{-\epsilon} = |A| ^{-\epsilon}$, since $|A| = |H| N^{-\epsilon}$.  Hence we expect $E_4(A) \sim |A|^{3 -\epsilon}$.  By a similar calculation we expect $E_8 (A) \sim |A|^{7 - \epsilon}$.  Note that this example achieves the maximal $E_8$ allowed by the proposition above.  On the other hand, if we let $A$ be given by $H+R$ where $H$ is a subgroup of size $N^{1-\epsilon}$ and $R$ is a ``random" set of size $N^{\epsilon}$, then 
\beqa
E_4(A) = E_4(H)E_4(R) = N^{3 -3\epsilon } N^{2\epsilon} = N ^{3-\epsilon}.
\eeqa
However in this case
\beqa
E_8(A) = E_8(H)E_8(R) = N^{7-7\epsilon} N^{4 \epsilon } = N^{7-3\epsilon}.
\eeqa
Similarly if $A$ is the union of (unrelated) subspaces $H_j$, for $j= 1, 2, \dots, N^{\epsilon \over 2}$ where $|H_j| = N^{1-{\epsilon \over 2} }$ for each $j$, then 
\beqa
E_4(A) \sim \sum _{j =1 } ^{N^{\epsilon \over 2} } E_4(H_j) = N^{\epsilon \over 2} N^{3-{3\epsilon \over 2} } = N^{3-\epsilon}  
\eeqa
and 
\beqa
E_8(A) \sim \sum _{j =1 } ^{N^{\epsilon \over 2} } E_8(H_j) = N^{\epsilon \over 2} N^{7-{7\epsilon \over 2} } = N^{7-3\epsilon}.  
\eeqa
Note that these last two sets achieve the minimal $E_8$ allowed by the proposition above.

%%%%%%%%%%%%%%%%%%%%%%%%%%%%%%%%%%%%%%%%%%%%%%%%%%%%%%%%%%%%%%%%%%%%%%%%%%

\section{A simple reduction}

%%%%%%%%%%%%%%%%%%%%%%%%%%%%%%%%%%%%%%%%%%%%%%%%%%%%%%%%%%%%%%%%%%%%%%%%%%

The bulk of the work in this paper goes toward proving the following theorem, which identifies a large piece of the set $\Delta$ with substantial structure.  Theorem \ref{main} then follows by repeatedly finding these large pieces until most of $\Delta$ has been exhausted.  
\begin{theorem}\label{specstruct}
Fix $\tau _0> 0$.
There exists a universal function $f_{\tau _0} \colon (0, 1 ) \rightarrow (0, \infty) $ with $f_{\tau _0}(\eta ) \rightarrow 0$ as $\eta \rightarrow 0 $ such that the following holds.
Let $\Delta \subseteq Z$ be a symmetric set of size $M$.  Let $\sigma_0 >0 $ be such that $E_4 (\Delta  ) \sim M^{2+ \tau_0 }$ 
%for every $\Delta ' \subseteq \Delta $ with $|\Delta '| \gtrsim |\Delta |$, 
and such that $\Delta $ is at most $\sigma _0 $-smoothing, i.e., 
$E_8 (\Delta ) \lesssim M^{4 + 3 \tau_0 + \sigma_0}$.  Also assume that for every $a\in \Delta$, 
\beqa
|\{ (b,c,d) \in \Delta ^3 \colon a-b =c-d \}| \lesssim M^{1+\tau}.
\eeqa
Then there exists $\alpha \geq 0$, a symmetric set $H \subseteq Z$, and a symmetric set $X \subseteq Z$ such that 
%\beqa 
%|B| \gtrsim M^{3 - \alpha - f(\sigma)} ,
%\eeqa
\beqa
|H| \lesssim M^{\tau + \alpha + f_{\tau _0} (\sigma_0)},
\eeqa
\beqa
|X| \lesssim M^{ 1-\tau - 2 \alpha + f_{\tau _0} (\sigma _0)},
\eeqa
\beqa
|H-H| \lesssim |H|^{1+ f_{\tau _0} (\sigma _0)},
\eeqa
and such that 
\beqa
|(X+H) \cap \Delta | \gtrsim M^{ 1 - \alpha  - f_{\tau _0} (\sigma _0) }.
\eeqa
\end{theorem}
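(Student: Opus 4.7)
The plan is to locate a single dyadic level of the convolution $\one_\Delta * \one_\Delta$ at which the near-extremality in Proposition \ref{holder} is witnessed, and then to feed that level set into the asymmetric Balog--Szemer\'edi--Gowers theorem (Lemma \ref{absg}) to produce the structured sets $X$ and $H$.

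Set $r(x) := (\one_\Delta * \one_\Delta)(x)$, so that $E_4(\Delta) = \sum_x r(x)^2 \sim M^{2+\tau_0}$ and $E_8(\Delta) = \sum_x (r*r)(x)^2 \lesssim M^{4+3\tau_0+\sigma_0}$. A dyadic pigeonhole on the level sets of $r$ produces a single scale $\lambda = M^{1-\alpha}$ (this fixes the free parameter in the conclusion) together with $S := \{x : r(x) \sim \lambda\}$ satisfying $|S|\lambda^2 \sim E_4$, so $|S| \sim M^{\tau_0 + 2\alpha}$. Because the non-smoothing slack is only $M^{\sigma_0}$, this single level must also essentially saturate $E_8$, which forces $r$ to be flat on $S$ up to $M^{o(1)}$ factors and suppresses spikes outside $S$; this concentration is what ultimately lets us pass from a convolutional bound to a bipartite additive-energy bound.

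Next, for each $a \in \Delta$ consider the fiber $T_a := \{b \in \Delta : a - b \in S\}$. Restricting $(\one_\Delta)^{*3}(a) = \sum_b \one_\Delta(b)\, r(a-b) \lesssim M^{1+\tau}$ to those $b$ with $a-b \in S$ gives $\lambda |T_a| \lesssim M^{1+\tau}$, so $|T_a| \lesssim M^{\tau+\alpha}$, while $\sum_{a \in \Delta} |T_a| = \sum_{s \in S} r(s) \sim M^{1+\tau_0+\alpha}$. A further pigeonhole on the approximate sizes of the $T_a$ and on a common ``typical fiber shape'' (which I expect is the role of the new notion of sideways comity) then selects a set $B \subseteq \Delta$ with $|B| \sim M^{1-\alpha}$ and a companion set $C \subseteq Z$ with $|C| \sim M^{\tau+\alpha}$ such that the bipartite quadruple count $\#\{(b_1,c_1,b_2,c_2) \in B^2 \times C^2 : b_1+c_1=b_2+c_2\}$ is at least $|B|^{1-\eta}|C|^2$, with $\eta \to 0$ as $\sigma_0 \to 0$.

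Feeding $B$ and $C$ into Lemma \ref{absg} yields sets $K$ and $X$ with $|K| \gtrsim |C|^{1-\mu} \sim M^{\tau+\alpha}$, $|K-K| \lesssim |K|^{1+\mu}$, $|X| \lesssim |B|^{\mu}\cdot |B|/|C| \sim M^{1-\tau-2\alpha}$, and $|B \cap (X+K)| \gtrsim |B|^{1-\mu} \sim M^{1-\alpha}$; since $B \subseteq \Delta$ the required cover follows. Setting $H := K - K$ (or $K \cup (-K)$) symmetrizes at the cost of a factor absorbed into $f_{\tau_0}(\sigma_0)$, and $X$ is symmetrized similarly. The main obstacle is the middle step: the non-smoothing hypothesis is a spectral / convolutional statement, not a bipartite-energy statement, and translating it into a clean BSG input without losing too much in the exponents is the technical heart of the argument and is the source of the worse dependence of $f_{\tau_0}$ on $\sigma_0$ compared to the more delicate proof in \cite{BK}.
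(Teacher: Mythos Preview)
Your sketch has the right endpoints (pigeonhole to a popular level set of $r = \one_\Delta * \one_\Delta$, then feed an asymmetric pair into Lemma~\ref{absg}), but the middle step is not a pigeonhole, and your description of it mislocates where the nonsmoothing hypothesis enters and misidentifies what sideways comity does.

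Concretely: the assertion that ``this single level must also essentially saturate $E_8$, which \dots\ suppresses spikes outside $S$'' is not what the paper proves or needs. The only consequence of nonsmoothing that is used is the bound $E_4(D)\,\lambda^4 \lesssim E_8(\Delta)$, i.e.\ an upper bound on the additive energy of the level set $D$; this is exactly the content of Lemma~\ref{comitytonight}. That bound on $E_4(D)$ drives a \emph{dichotomy}: either the overlaps $|\Delta[x]\cap\Delta[y]|$ are near-maximal for a large set of pairs $(x,y)\in D^2$ (small comity), or one can pass to a new additive structure at strictly lower height. Sideways comity is a second, logically independent dichotomy (Lemma~\ref{SWcomitytonight}) controlling the overlaps $|\Delta[x]\cap F_{x,b}|$; crucially, its proof does \emph{not} invoke nonsmoothing at all, so it is not ``the role of sideways comity'' to refine a fiber-shape pigeonhole coming from the $E_8$ bound. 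The argument iterates these two dichotomies, lowering the height each time one of them fails, until both comity parameters are small; only then does the claim in Section~\ref{structuresection} convert small comity plus small sideways comity into the bipartite energy lower bound $E(\Delta[x],F_{x,a}) \gtrsim |\Delta[x]|^2 |F_{x,a}| M^{-O(\mu+\nu)}$ needed for Lemma~\ref{absg}. In particular, the $\alpha$ appearing in the conclusion is the height at which the iteration terminates, not necessarily the scale picked out by your initial pigeonhole on $r$, and the sets fed to BSG are the specific pair $(F_{x,a},\Delta[x])$ for a well-chosen $x\in D$ and $a\in\Delta[x]$, not a ``typical fiber shape'' $C$ common to many $a$.
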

We remark that the symmetry conclusions on $H$ and $X$ are in place only to guarantee that after removing $X+H$ from $\Delta$, the remainder is still symmetric.

\begin{proof}[Proof of Theorem \ref{main} given Theorem \ref{specstruct}]
Our first fact allows us to assume that no $a\in \Delta$ participates in too many quadruples, which is one of the hypotheses needed for Theorem \ref{specstruct}.
\begin{proposition} \label{nottoopopular}
If $E_4(\Delta ') \gtrsim M^{2+ \tau}$ for every $\Delta ' \subseteq \Delta $ with $|\Delta '| \gtrsim |\Delta |$, then there is 
$\widetilde{\Delta}\subseteq \Delta $ with $E_4 ( \widetilde{\Delta} ) \gtrsim M^{2+ \tau}$ such that for each $a\in \widetilde{\Delta}$, 
\beqa
|\{ (b,c,d) \in \Delta ^3 \colon a= b+c - d \}| \lesssim M^{1 + \tau}  .
\eeqa
\end{proposition}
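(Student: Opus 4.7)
The plan is a one-shot popularity (Markov) argument. First observe the identity
$$\sum_{a \in \Delta} r(a) = E_4(\Delta), \qquad \text{where } r(a) := |\{(b,c,d) \in \Delta^3 : a = b+c-d\}|,$$
obtained by parametrizing quadruples $a_1 + a_2 = a_3 + a_4$ via $a_4 = a_1 + a_2 - a_3$ and fixing $a_4 = a$. Reading the hypothesis on $E_4$ of large subsets as a two-sided bound (which is how it appears in Theorem \ref{main}), we have $E_4(\Delta) \lesssim M^{2+\tau}$, so the total weight $\sum_a r(a)$ is at most $C_0 M^{2+\tau}$ for some absolute $C_0$.

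Next I would introduce the heavy set $\mathcal H := \{a \in \Delta : r(a) > K M^{1+\tau}\}$, where $K$ is a large constant to be chosen. Markov's inequality gives
$$|\mathcal H| \cdot K M^{1+\tau} \leq \sum_{a \in \Delta} r(a) \leq C_0 M^{2+\tau},$$
hence $|\mathcal H| \leq (C_0/K)\,M$. Choosing $K$ large enough relative to $C_0$ and the implicit constant in ``$|\Delta'| \gtrsim |\Delta|$'', we force $|\mathcal H| \leq M/2$. Set $\widetilde{\Delta} := \Delta \setminus \mathcal H$; then $|\widetilde\Delta| \geq M/2$, so $|\widetilde\Delta| \gtrsim M$, and the hypothesis applied to $\widetilde\Delta$ yields $E_4(\widetilde\Delta) \gtrsim M^{2+\tau}$. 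By construction every surviving $a \in \widetilde\Delta$ obeys $r(a) \leq K M^{1+\tau}$, i.e., the desired bound $\lesssim M^{1+\tau}$.

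There is no real obstacle here: the whole content is that heavy elements (those violating the $M^{1+\tau}$ bound) contribute disproportionately to $E_4(\Delta)$, so only a controlled small fraction of $\Delta$ can be heavy, and the hypothesis on large subsets then guarantees that trimming them away does not destroy the additive energy. The only thing to be careful about is reconciling the stated ``$\gtrsim$'' in the proposition with the need for an upper bound $E_4(\Delta) \lesssim M^{2+\tau}$; this is harmless since the proposition is being invoked in the setting of Theorem \ref{main}, where the $\sim$ hypothesis supplies exactly this.
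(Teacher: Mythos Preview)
Your proof is correct and is essentially the same argument as the paper's: both use the identity $\sum_{a\in\Delta} r(a)=E_4(\Delta)\lesssim M^{2+\tau}$, apply Markov's inequality to bound the number of heavy elements by a small fraction of $M$, delete them, and invoke the large-subset hypothesis to retain full energy. You are slightly more explicit than the paper about the constants and about why the upper bound $E_4(\Delta)\lesssim M^{2+\tau}$ is available, but the method is identical.
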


In other words, no $a$ participates in more than $\sim M^{1 + \tau } $ quadruples.  
\begin{proof}
Observe that
\beqa
\sum _{a\in \Delta } |\{ (b,c,d) \in \Delta ^3 \colon a= b+c - d \}| \lesssim M^{2 + \tau },
\eeqa
and hence there are fewer than $\sim {1\over C} M$ elements $a$ such that the summand is $\geq C M^{1 + \tau }$.  We simply remove this set of $a$ and note that the remaining set, which we call $\widetilde{\Delta}$, still has essentially full energy by hypothesis since it contains most elements of $\Delta$.
\end{proof}

Find $\widetilde{\Delta}$ satisfying the conclusion of Proposition \ref{nottoopopular} above.  Importantly, the sets $\Delta _j$ defined below inherit this property (so we do not need to apply Proposition \ref{nottoopopular} more than once).  Now we may apply Theorem \ref{specstruct} to find $ \alpha _1, B_1, H_1, X_1$.  Then let $\Delta _1 = \widetilde{\Delta} \setminus B_1 $.  Note that since 
$| \Delta _1 | \gtrsim |\Delta| $, $\Delta _1$ still has essentially full energy by the hypothesis of Theorem \ref{main}, and hence satisfies the hypotheses of Theorem \ref{specstruct}.  (The symmetry hypothesis is also satisfied, as mentioned immediately after the statement of Theorem \ref{specstruct}.)   Having defined 
$\Delta _{j-1}$, apply Theorem \ref{specstruct} to find $ \alpha _j, B_j, H_j, X_j$, then define 
$B_j = (X_j + H_j) \cap \Delta $ and $\Delta _{j} = \Delta _{j-1} \setminus B_j$.  We may continue to find blocks $B_j$ until 
\beqa
\left| \bigcup _{k=1} ^{j-1} B_k \right| \gtrsim |\Delta |.
\eeqa
Not all the $\alpha _j$ need to be equal, but we fix this by pigeonholing to find $\alpha $ such that 
\beqa
\left| \bigcup _{k \colon |B_k| \sim N^{1-\alpha \pm f(\sigma) } } B_k \right| \gtrsim { |\Delta | \over \log M }.
\eeqa
\end{proof}

An outline of the proof of Theorem \ref{specstruct} is as follows.  First, we pigeonhole to find $D \subseteq \Delta - \Delta $ such that 
$|\Delta \cap (x+ \Delta )| $ is approximately constant for $x\in D$ and such that differences in $D$ account for most of the energy in $\Delta$.  
$D$ corresponds to the differences from a graph $G\subseteq \Delta \times \Delta $.  We will measure how elements of $D$ interact with each other and with elements of $\Delta$ using quantities called \it comity\rm, which was introduced in \cite{BK} (and even to some degree in \cite{KK}), and \it sideways comity\rm, which we introduce here.  When both of these quantities are small, we can make precise statements about the structure of 
$\Delta$ by using the asymmetric Balog-Szemeredi-Gowers theorem above.  The exact structure depends on $|G|$.  
See Section \ref{structuresection} for details on finding this structure.  When either of these quantities is large, we may find a graph $G'$ with 
$|G'| >> |G|$ such that $G'$ still accounts for most of the energy of $\Delta$.  See Section \ref{comitysection} for the large comity case.  See Section \ref{sidewayscomitysection} for the large sideways comity case.  This process terminates  once we reach $|G'| \sim |\Delta |^2 $, which happens after a controlled number of iterations.  By this point, we must have achieved small comity and small sideways comity.  See Section \ref{iterationsection} for details about the iteration.

%%%%%%%%%%%%%%%%%%%%%%%%%%%%%%%%%%%%%%%%%%%%%%%%%%%%%%%%%%%%%%%%%%%%%%%%%%%

\section{Additive structures}

%%%%%%%%%%%%%%%%%%%%%%%%%%%%%%%%%%%%%%%%%%%%%%%%%%%%%%%%%%%%%%%%%%%%%%%%%%%

In this section we present some basic definitions.
\begin{definition}
We define an additive structure $\alpha$ on $\Delta$ at height $\alpha$ to be a  pair $(G,D)$, where $G \subseteq \Delta \times \Delta $ is a graph such that $|G| \sim M^{2- \alpha }$, where $D$ is a set such that $a-b \in D$ for $(a,b) \in G$, and where 
$|\Delta \cap (a-b + \Delta )| $ is essentially constant for $(a,b) \in G$, i.e., 
\beqa
\sup_{(a,b)\in G} |\Delta \cap (a-b + \Delta )| \leq 2 \min_{(a,b)\in G} |\Delta \cap (a-b + \Delta )|  .
\eeqa
\end{definition}
\begin{definition}
For any graph $G$, we define the energy of $G$:
\beqa
E(G) = \sum _x |\{ (a,b) \in G \colon a-b = x \} | ^2 .
\eeqa
\end{definition}
Note that this is just the number of quadruples in $\Delta$ accounted for by pairs in the graph.  
The following proposition shows that we can find an additive structure capturing most of the energy of $\Delta$.  This will help us start the iteration discussed in Section \ref{iterationsection}.
\begin{proposition} \label{pigeonhole}
There exists an additive structure $(G, D)$ at height $\alpha$ for some $\alpha \leq {1-\tau \over 2} $ such that 
\beqa
E(G) \gtrsim{{ M^{2  + \tau } } \over { (\log M)^2 } } .
\eeqa
\end{proposition}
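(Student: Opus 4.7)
The plan is to write $r(x) := |\Delta \cap (x+\Delta)|$, so that $E_4(\Delta) = \sum_x r(x)^2 \sim M^{2+\tau}$ and $\sum_x r(x) = M^2$, and then dyadically partition $\Delta - \Delta$ into level sets $N_k := \{x : r(x) \in [2^k, 2^{k+1})\}$ for $k = 0, 1, \ldots, \lceil \log_2 M \rceil$. The idea is to find a suitable $k^*$ and take $s = 2^{k^*}$, $D = N_{k^*}$, and $G = \{(a,b) \in \Delta \times \Delta : a - b \in D\}$: then by construction $(G,D)$ is an additive structure with $|G| = |D| \cdot s$, $E(G) = |D| \cdot s^2$, and $r$ essentially constant on $D$ (within a factor of $2$).

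The key observation is that the pair-count-weighted average of $2^k$ across the levels equals $E_4/M^2 \sim M^\tau$, which is at most $M^{(1+\tau)/2}$ since $\tau \leq 1$. Markov applied to the pair-count distribution $p_k := |N_k| 2^k/M^2$ (a probability measure with mean $\sim M^\tau$) yields that at most half of the $M^2$ pairs have $r(a-b) > 2 M^\tau$. On the other side, levels with $r(x) < M^\tau/C$ contribute only $(M^\tau/C)\cdot M^2 = M^{2+\tau}/C$ to $E_4$, a small fraction when $C$ is a large constant. Fixing such a $C$ and pigeonholing over the $O(1)$ dyadic levels in the mid-range $[M^\tau/C, 2M^\tau]$ should produce a level $k^*$ with $|N_{k^*}| \cdot 4^{k^*} \gtrsim M^{2+\tau}$, which immediately yields $E(G) \gtrsim M^{2+\tau}$ (much better than the required $M^{2+\tau}/(\log M)^2$) and $|G| = E(G)/s \gtrsim M^{2+\tau}/(2M^\tau) \sim M^2$, so the height $\alpha = 2 - \log_M |G|$ is $o(1)$ and in particular satisfies $\alpha \leq (1-\tau)/2$.

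The main obstacle I expect is that the Markov estimate above controls only the \emph{pair count} in the heavy tail $r(x) > 2M^\tau$, not its \emph{energy} contribution, so a priori the extreme tail could still absorb a constant fraction of $E_4$ and starve the mid-range. To rule this out I would invoke the popularity hypothesis from Theorem~\ref{specstruct}: summing $\sum_b r(a-b) \lesssim M^{1+\tau}$ over $a \in \Delta$ and restricting to $b$'s with $r(a-b) > t$ gives $\sum_{r(x) > t} r(x) \leq M^{2+\tau}/t$, which re-derives the pair-count bound at $t = 2 M^\tau$ in a sharper form, and combining it with the $E_8$ nonsmoothing bound $\sum_x r(x)^4 \leq E_8 \lesssim M^{4+3\tau+\sigma_0}$ via Cauchy--Schwarz should force the high-tail energy strictly below $E_4$, allowing the mid-range pigeonhole to succeed with at most an additional loss of polylogarithmic factors absorbed into the $(\log M)^2$ on the right-hand side.
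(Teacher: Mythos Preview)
Your plan has a genuine gap: the claim that the dominant energy level sits at $r(x)\sim M^{\tau}$ (so that the resulting height is $o(1)$) is false under the stated hypotheses. Take the union-of-subspaces example from Section~\ref{examplesection}: $\Delta=\bigcup_{j=1}^{M^{\epsilon/2}}H_j$ with $|H_j|=M^{1-\epsilon/2}$ and the $H_j$ generic. Here $\tau=1-\epsilon$, $E_8(\Delta)\sim M^{4+3\tau}$ (so $\sigma_0=0$), and popularity holds with equality. Yet essentially \emph{all} of $E_4(\Delta)$ comes from differences $x\in\bigcup_j H_j$, where $r(x)\sim M^{1-\epsilon/2}=M^{(1+\tau)/2}$, not $M^{\tau}$. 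Your mid-range $[M^{\tau}/C,\,2M^{\tau}]$ carries a vanishing fraction of the energy in this example, so no pigeonhole over those $O(1)$ levels can succeed. The $E_8$ rescue you propose cannot work either, precisely because this example saturates the nonsmoothing bound: any inequality of the form $\sum_{r(x)>t}r(x)^2\le \Phi(t,E_8,\ldots)$ that you derive from $E_8\lesssim M^{4+3\tau+\sigma_0}$ must be consistent with $\sum_{r(x)>2M^{\tau}}r(x)^2\sim M^{2+\tau}$ here, so it cannot force the high tail strictly below $E_4$. (Concretely, the bounds $\sum_{r>t}r^2\le t^{-2}E_8$ and $(\sum_{r>t}r^2)^2\le(\sum_{r>t}r)(\sum r^4)$ give only $M^{4+\tau}$ and $M^{3+\tau}$ at $t\sim M^{\tau}$, both useless.)

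What the paper does instead is pigeonhole once to get an additive structure $(G,D)$ at \emph{some} height $\alpha$ with $E(G)\gtrsim M^{2+\tau}/\log M$, and then, if $\alpha>(1-\tau)/2$, run a ``flip'': write $E(G)=\sum_{a,c}|\{(b,d):(a,b)\in G,\ a-b=c-d\}|$ and use the popularity hypothesis to bound each summand by $M^{1-\alpha}$, which forces the support $G'=\{(a,c):\text{summand}\neq0\}$ to have $|G'|\gtrsim M^{1+\tau+\alpha}/\log M$. After one more dyadic refinement this yields an additive structure at height $\alpha'\le 1-\tau-\alpha<(1-\tau)/2$. The point is not that the dominant level has small $r$, but that a level at height $\alpha$ automatically manufactures a companion level at height $1-\tau-\alpha$; one of the two is at most $(1-\tau)/2$. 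Your outline is missing this step.
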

\begin{proof}
To see this, just note that 
\beqa
E_4 (\Delta ) = \sum _x |\Delta \cap (x+ \Delta ) | ^2 = \sum _{x\in \Delta - \Delta}  |\Delta \cap (x+ \Delta ) | ^2.
\eeqa
Since $0\leq |\Delta \cap (x+ \Delta ) | \leq |\Delta |$, we can pigeonhole over $ \log M$ scales to find a set 
$D \subseteq \Delta - \Delta $ such that 
\beqa
\sum _{x \in D} |\Delta \cap (x+ \Delta ) | ^2 \gtrsim  {{ M^{2  + \tau } } \over {  \log M } } 
\eeqa
and such that $ |\Delta \cap (x+ \Delta ) | \sim M^{\alpha + \tau } $ for some $\alpha \geq 0$ and every  $x\in D$.  Then define 
\beqa
G = \{ (a,b) \in \Delta ^2 \colon a-b \in D \} .
\eeqa
This pair $(G,D)$ is an additive structure at height $\alpha$.  We now show that $\alpha $ can be taken $\leq {1-\tau \over 2}$.  Note that 
\beqa
{ M^{2+ \tau} \over \log M }  \lesssim E(G) = 
	\sum _{a,c} | \{ (b,d) \colon (a,b)\in G \text{ and } a-b=c-d \} |.
\eeqa
We know that for each $a\in \Delta $ there are at most $\sim M^{1-\alpha}$ many $b \in \Delta $ such that 
$(a,b)\in G$ (for otherwise we would violate the assumption of Theorem \ref{specstruct}).  Hence the summand on the right is bounded by 
$\sim M^{1-\alpha}$.  This implies that the summand is nonzero for a set $|G'|$ of pairs $(a,c)$, with 
\beqa
|G'| \gtrsim { M^{2+ \tau} \over \log M } {1\over M^{1-\alpha} }  = {  M^{ 1+ \tau + \alpha }  \over \log M} ,
\eeqa
and hence (after pigeonholing over subgraphs of $G'$ such that 
$| \Delta \cap (a-c + \Delta) |$ is essentially constant, which gives us the corresponding $D'$) 
that at least ${ M^{2+ \tau} \over (\log M )^2 } $ of the quadruples in $\Delta$ come from a graph of height 
$\alpha ^{\prime}$ with $\alpha ^{\prime} \leq 2-(1+ \tau + \alpha) = 1-\tau - \alpha $.  Note that $1-\tau - \alpha$ decreases as $\alpha $ increases, and they are equal when $\alpha = {1-\tau \over 2}$.  This proves the claim about the height, since either 
$\alpha $ or $\alpha '$ is  $ \leq {1-\tau \over 2}$.
\end{proof}
%Now we simply iterate Lemma \ref{iteration}, finding a sequence of additive structures 
%\beqa
%(G_1, D_1), (G_2, D_2), \dots , (G_K, D_K)
%\eeqa
%such that the height of $(G_j , D_j)$ is less than $1 + O(f(\sigma)) - j {C\over {\log {1\over \sigma } } } $, and such that 
%\beqa
%E(G_j) \gtrsim M^{6+ \tau} M^{-O(\sigma 5 ^j) } .
%\eeqa
%It is clear that since the height decreases by ${C\over {\log {1\over \sigma } } }$ at each iteration, we have 
%$K \leq  {1\over C} \log { 1\over \sigma }  $.  In particular, we will obtain the structural result in the statement of the lemma, starting with 
%$\delta \geq M^{-O(\sigma 5 ^{{1\over C} \log { 1\over \sigma }   }  )}$.  Taking $C$ large  is enough to ensure that this value of $\delta$ goes to $1$ as $\sigma $ approaches $0$.  This yields the conclusion of Theorem \ref{specstruct}.

%%%%%%%%%%%%%%%%%%%%%%%%%%%%%%%%%%%%%%%%%%%%%%%%%%%%%%%%%%%%%%%%%%%

\section{Comity} \label{comitysection}

%%%%%%%%%%%%%%%%%%%%%%%%%%%%%%%%%%%%%%%%%%%%%%%%%%%%%%%%%%%%%%%%%%%
The goal of this section is to introduce the notion of \it comity \rm  and to prove Lemma \ref{comitytonight}, which tells us that either an additive structure has good comity, or the set $\Delta$ admits an additive structure of lower height.  Both the notion of comity and 
Lemma \ref{comitytonight} appeared in \cite{BK}.
We start by introducing a convenient shorthand.  For $x\in \Delta - \Delta$, define
\beqa
\Delta [x] = \Delta \cap (x+ \Delta) = \{ a \in \Delta \colon a-x \in \Delta \};
\eeqa
i.e., $\Delta [x]$ is the set of elements that participate (in the first position) in a difference of $x$.  To define comity, assume we are have an additive structure $(G,D)$ at height $\alpha$.  
By interchanging sums and applying Cauchy-Schwarz we have
\beqa
\sum _{x\in D} \sum _{y\in D} | \Delta [x] \cap \Delta [y] |
&=& \sum _{a\in \Delta } \left( \sum _{x\in D} \one _{\Delta [x]} (a) \right) ^2 \\
&\gtrsim &  M^{3-2 \alpha } .
\eeqa
By pigeonholing over $C \log M$ scales, we can find $\calp \subseteq D\times D$ and $\beta $ such that 
\beqan \label{needsaname}
\sum _{(x,y) \in \calp } | \Delta [x] \cap \Delta [y] | \gtrsim { { M^{3-2\alpha }  } \over { \log M} }, 
\eeqan
and such that $| \Delta [x] \cap \Delta [y] | \sim M^{\beta} $ for $(x,y) \in \calp $.  Note that this immediately implies 
$|\calp | \gtrsim  { { M^{3-2\alpha - \beta}  } \over {  \log M} } . $
We see below that paying attention to $\beta$ is profitable, which prompts the first of our key definitions.  Note that of course $\beta \leq \tau + \alpha $ since $|\Delta [x] | \sim  M^{\tau + \alpha} $.  
\begin{definition} \label{comitydef}
We say that an additive structure $(G,D)$ at height $\alpha $ has \it comity $\mu$ \rm if there exists 
$ \beta \geq \tau + \alpha - \mu $ and 
$\calp \subseteq D \times D$ such that $| \Delta [x] \cap \Delta [y] | \sim M^{\beta} $ for $(x,y) \in \calp $, and such that 
$|\calp | \gtrsim  { { M^{3-2\alpha - \beta}  } \over {  \log M} }$.  
\end{definition}
The computation above proves the following:
\begin{proposition} \label{comityexists}
For any additive structure $(G,D)$ at height $\alpha $, there exists $\mu >0$ such that $(G,D)$ has comity $\mu$.
\end{proposition}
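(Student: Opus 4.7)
The proposition is essentially a repackaging of the Cauchy--Schwarz and pigeonholing computation displayed immediately above Definition \ref{comitydef}, so my task is to organize that computation and verify that the resulting numerology matches the definition for some positive $\mu$. The plan has three steps: produce the lower bound $\sum_{x,y \in D}|\Delta[x]\cap\Delta[y]| \gtrsim M^{3-2\alpha}$; dyadically pigeonhole to obtain a set $\calp$ on which $|\Delta[x]\cap\Delta[y]|$ is essentially constant at some scale $M^\beta$; then choose $\mu$ and check Definition \ref{comitydef}.

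For the first step I would interchange the order of summation to rewrite
\beqa
\sum_{x \in D}\sum_{y \in D}|\Delta[x]\cap\Delta[y]| = \sum_{a \in \Delta}\Big(\sum_{x \in D}\one_{\Delta[x]}(a)\Big)^2,
\eeqa
and then lower bound the right-hand side by Cauchy--Schwarz as $(\sum_a \sum_x \one_{\Delta[x]}(a))^2/|\Delta|$. The linear functional $\sum_{a\in\Delta}\sum_{x\in D}\one_{\Delta[x]}(a) = \sum_{x \in D}|\Delta[x]|$ equals $|G|\sim M^{2-\alpha}$, because $(G,D)$ is an additive structure at height $\alpha$ and $|\Delta[x]|\sim M^{\tau+\alpha}$ is essentially constant on $D$. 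Dividing by $|\Delta|=M$ yields the claimed $M^{3-2\alpha}$ bound.

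For the second step, since $|\Delta[x]\cap\Delta[y]|$ takes integer values in $[0,M]$, I would pigeonhole over the $O(\log M)$ dyadic scales to find $\beta \geq 0$ and $\calp \subseteq D\times D$ with $|\Delta[x]\cap\Delta[y]|\sim M^\beta$ on $\calp$ and $|\calp|\,M^\beta \gtrsim M^{3-2\alpha}/\log M$; this immediately gives the cardinality lower bound $|\calp|\gtrsim M^{3-2\alpha-\beta}/\log M$ demanded by Definition \ref{comitydef}.

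For the third step, since $|\Delta[x]\cap\Delta[y]|\leq |\Delta[x]|\sim M^{\tau+\alpha}$ for $x\in D$, the exponent $\beta$ must satisfy $\beta \leq \tau+\alpha$; hence $\tau+\alpha-\beta\geq 0$. Taking $\mu = \max\{\tau+\alpha-\beta,\varepsilon\}$ for any fixed small $\varepsilon>0$ produces a strictly positive $\mu$ satisfying $\beta \geq \tau+\alpha-\mu$, which together with the previous step verifies all the conditions in Definition \ref{comitydef}. There is no substantive obstacle --- the argument is a direct application of Cauchy--Schwarz and dyadic pigeonholing, with logarithmic factors absorbed by $\gtrsim$. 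The only point that needs a moment's care is ensuring $\mu>0$ (rather than merely $\mu\geq 0$), which is harmless because the comity condition is monotone in $\mu$.
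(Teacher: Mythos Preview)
Your proposal is correct and follows exactly the approach the paper takes: the paper simply writes ``the computation above proves the following,'' referring to precisely the interchange-of-sums plus Cauchy--Schwarz bound $\sum_{x,y\in D}|\Delta[x]\cap\Delta[y]|\gtrsim M^{3-2\alpha}$ and the dyadic pigeonholing into $\calp$ and $\beta$ that you have spelled out. Your extra care about ensuring $\mu>0$ rather than $\mu\ge 0$ via monotonicity is sound and is implicitly what the paper intends.
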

We remark that by the definition, if $(G,D)$ has comity $\mu$, then it also has comity $\mu '$ for all $\mu ' \geq \mu$.  In 
Section \ref{iterationsection} we will need to select a particular value, but this is of no consequence.
We now prove that either our additive structure has small comity parameter, or there exists an additive structure at lower height.  The key assumption in this lemma is the hypothesis of small additive smoothing; in fact, this is the only part of the structural theorem that requires it.  
\begin{lemma} \label{comitytonight}
Let $(G,D)$ be an additive structure on $\Delta $ at height $\alpha$ such that $E(G) \gtrsim M^{2 + \tau}$,
and let $\mu >0$.  Assume $E_8 (\Delta ) \lesssim M^{4 + 3 \tau + \sigma}$.
Then either $(G,D)$ has comity $\mu$ or there exists an additive structure $(G', D')$ on $\Delta $ at height 
$\leq \alpha - \mu + 2\sigma$ such that $E(G') \gtrsim M^{-2\sigma } E(G) $.
\end{lemma}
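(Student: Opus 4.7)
The plan is a dyadic pigeonhole dichotomy, in the same spirit as the proof of Proposition~\ref{comityexists}. Starting from the Cauchy--Schwarz bound
$$\sum_{(x,y) \in D \times D} |\Delta[x] \cap \Delta[y]| \gtrsim M^{3-2\alpha}$$
that precedes Definition~\ref{comitydef}, I would pigeonhole the summand over the $O(\log M)$ dyadic scales of $|\Delta[x]\cap\Delta[y]|$ to extract a dominant level $\beta^{*}$ and a set $\calp^{*} \subseteq D \times D$ on which $|\Delta[x] \cap \Delta[y]| \sim M^{\beta^{*}}$ and $|\calp^{*}| \gtrsim M^{3-2\alpha-\beta^{*}}/\log M$. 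If $\beta^{*} \geq \tau+\alpha-\mu$ then $(G,D)$ has comity $\mu$ by definition and the lemma is proved; otherwise $\beta^{*} < \tau+\alpha-\mu$, and the rest of the argument must build the promised new additive structure.

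In this low-$\beta^{*}$ regime the new structure is extracted from the cherries furnished by $\calp^{*}$. For each $(x,y)\in\calp^{*}$ and each $a\in\Delta[x]\cap\Delta[y]$, the pair $(a-x,a-y)$ lies in $\Delta\times\Delta$ with difference $x-y$; in particular every $d \in D^{**} := \{x-y : (x,y)\in\calp^{*}\}$ has $|\Delta[d]| \geq M^{\beta^{*}}$, and the total count of (triple, pair) incidences is $|\calp^{*}| M^{\beta^{*}} \gtrsim M^{3-2\alpha}/\log M$. A second dyadic pigeonhole over the levels of $|\Delta[d]|$ on $D^{**}$ together with the multiplicity $m(d) := |\{(x,y)\in\calp^{*} : x-y=d\}|$ isolates a subset $D' \subseteq D^{**}$ on which both quantities are essentially constant, $|\Delta[d]|\sim M^{\gamma^{*}}$ and $m(d)\sim m_{0}$, while retaining most of the incidence mass. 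The candidate additive structure is $(G',D')$ with $G' = \{(b,c)\in\Delta\times\Delta : c-b\in D'\}$; the constancy of $|\Delta[d]|$ on $D'$ makes it a valid additive structure.

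The smoothing hypothesis enters when verifying that $(G',D')$ achieves height at most $\alpha-\mu+2\sigma$ and energy at least $M^{-2\sigma}E(G)$. The key identity, obtained by swapping the order of summation and using $|\{x\in Z : a,a'\in\Delta[x]\}| = |\Delta[a-a']|$, is
$$\sum_{(x,y)\in Z\times Z} |\Delta[x]\cap\Delta[y]|^{2} = \sum_{z} |\Delta[z]|^{3},$$
and the chain $\sum_{z}|\Delta[z]|^{3} \leq E_{6}(\Delta) \leq E_{4}(\Delta)^{1/2} E_{8}(\Delta)^{1/2} \lesssim M^{3+2\tau+\sigma/2}$, with the first step coming from the pointwise bound $r(z)^{2} \leq (r * r)(z)$ and the second from Cauchy--Schwarz, turns $|\calp^{*}|M^{2\beta^{*}} \leq \sum_{z}|\Delta[z]|^{3}$ into a constraint that, fed back into the second pigeonhole, pins down $m_{0}$ and $\gamma^{*}$ so $(G',D')$ has the claimed parameters, with the $M^{2\sigma}$ slack in the conclusion tracing to the $M^{\sigma/2}$ slack inherited from the hypothesis $E_{8}(\Delta)\lesssim M^{4+3\tau+\sigma}$. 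The main obstacle will be exactly this joint bookkeeping: simultaneously tracking the multiplicity distribution on $D^{**}$, the level profile of $|\Delta[d]|$, and the $E_{8}$-derived budget on $\sum_{z}|\Delta[z]|^{3}$, in order to extract an additive structure of strictly lower height while losing at most a factor $M^{-2\sigma}$ in energy.
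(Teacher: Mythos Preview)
Your overall architecture matches the paper's: pigeonhole on the scale of $|\Delta[x]\cap\Delta[y]|$ to get $\beta^{*}$ and $\calp^{*}$, take the comity branch if $\beta^{*}\ge\tau+\alpha-\mu$, and otherwise build the new structure from the differences $x-y$ of pairs in $\calp^{*}$. Where you diverge from the paper is in how the $E_8$ hypothesis is deployed, and this is where your proposal breaks down.

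The quantity that must be controlled is the \emph{multiplicity} $m(d)=|\{(x,y)\in\calp^{*}:x-y=d\}|$: to show the new graph has height $\le\alpha-\mu+2\sigma$ you need a lower bound on $|D'|$ (equivalently on $|D_{\beta}|$), and since $\sum_{d}m(d)=|\calp^{*}|$ is fixed, this amounts to an upper bound on how concentrated the $m(d)$ can be. The paper obtains this via
\[
|\calp^{*}|\le\sum_{z\in D_{\beta}}|D\cap(z+D)|\le\sqrt{|D_{\beta}|\,E_{4}(D)},
\]
together with the crucial observation that each $d\in D$ has $\sim M^{\alpha+\tau}$ representations in $\Delta$, so $E_{4}(D)\,M^{4(\alpha+\tau)}\lesssim E_{8}(\Delta)$. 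This is where nonsmoothing enters, and it gives $|D_{\beta}|\gtrsim M^{2+\tau-2\beta-\sigma}$, hence $|\tilde G|\gtrsim M^{2+\tau-\beta-\sigma}>M^{2-\alpha+\mu-\sigma}$.

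Your route through $\sum_{z}|\Delta[z]|^{3}\le E_{6}(\Delta)\le E_{4}^{1/2}E_{8}^{1/2}$ is a correct chain of inequalities, but it bounds the wrong object. It yields $|\calp^{*}|M^{2\beta^{*}}\lesssim M^{3+2\tau+\sigma/2}$, an \emph{upper} bound on $|\calp^{*}|$ (or on $|D'|M^{3\gamma^{*}}$), while what is needed is a \emph{lower} bound on $|D'|$. Nothing in your proposal touches the $\ell^{2}$ mass $\sum_{d}m(d)^{2}$ of the multiplicity function, so the ``joint bookkeeping'' you allude to cannot pin down $m_{0}$; after your second pigeonhole you only have $|D'|m_{0}\gtrsim M^{3-2\alpha-\beta^{*}}$, and without an independent bound $m_{0}\lesssim M^{1-\alpha-\mu+O(\sigma)}$ the height estimate does not follow. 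The fix is to replace the $\sum|\Delta[z]|^{3}$ step by the paper's bound on $E_{4}(D)$ through $E_{8}(\Delta)$.
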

We remark that in our application $\sigma$ will be much smaller than $\mu$, so the height will decrease by essentially $\mu$.  
\begin{proof}
First, find $\calp $ and $\beta$ as guaranteed by Proposition \ref{comityexists}.  Then define
\beqa
D_{\beta } = \{ d \in \Delta - \Delta \colon |\Delta \cap (d + \Delta )| \geq M^{\beta} \} |.
\eeqa
%We can use Chebyshev's inequality to estimate $|D_{\beta}|$:
%\beqa
%|D_{\beta}| & \leq & {1\over {M^{2\beta } } } \sum _d | \delta \cap (d + \Delta ) | ^2  \\
%&=& {1\over {M^{2\beta } } } E(G) \\
%&\leq & M^{6 + \tau - 2 \beta } .
%\eeqa
%We now use this to obtain a lower bound on $E_4 (D)$, and hence on $E_8 (\Delta)$.  
Note that for each $(x,y) \in \calp $ 
(where $\calp$ is obtained just as before the statement of the lemma) 
we have 
\beqa
|\Delta \cap (x-y + \Delta ) | \geq | \Delta [x] \cap \Delta [y] | \geq M^{\beta} .
\eeqa
In other words, there are at least $M^{\beta}$ ways in which $x-y$ can be written as a difference of pairs $(c,d)\in \Delta ^2 $, i.e., 
$x-y \in D_{\beta} $.  This is because if $a\in \Delta [x] \cap \Delta [y]$, then $a-x = c$, $a-y=d$ for some $c,d \in \Delta $, so $x-y=d-c$.  There are $\gtrsim M^{\beta}$ such $a$ giving us $\gtrsim $ pairs $(d,c)$ with $x-y=d-c$.  Hence we have, using Cauchy-Schwarz, 
\beqa
{ { M^{3-2\alpha - \beta}  } \over { \log M} } & \lesssim & |\calp | \\ 
& \leq & \sum _{x\in D_{\beta} } |D \cap (z + D) | \\ 
& \leq & \sqrt{ E_4 (D) |D_{\beta}| } %\\
%& \lesssim & \sqrt{ E_4 (D) M^{6 + \tau - 2\beta }  } .
\eeqa
%Combining this with the lower bound 
%\beqa
%|\calp | \geq \delta ^2 { { M^{9-2\alpha - \beta}  } \over { C \log M} }
%\eeqa
%gives us 
%\beqa
%E_4 (D) \geq {{\delta ^4} \over {(C \log M )^2 } } M^{12 - 4\alpha - \beta } 
%\eeqa
Since each $x\in D$ has $ M^{\alpha + \tau } $ representations, (i.e., $|\Delta \cap (x + \Delta) |\sim M^{\alpha + \tau}$, ) 
we also know that 
\beqa
E_4 (D) ( M^{\alpha + \tau } ) ^4 \lesssim E_8 (\Delta ) \lesssim M^{4 + 3\tau + \sigma } 
\eeqa
Hence 
\beqa
|D_{\beta} | & \gtrsim & {1 \over {E_4 (D)} }  {{1 } \over { ( \log M) ^2 } } M^{6 -4\alpha - 2\beta } \\
&\gtrsim & {{1} \over { ( \log M) ^2 } }  M^{2 + \tau - 2 \beta - \sigma }.
\eeqa
We are now ready to define the graph $(G', D') $ in the statement of the lemma.  If $ \beta \geq \tau + \alpha - \mu  $, then we already 
have comity $\mu$, by definition.  So assume $\beta < \tau + \alpha - \mu$.  Then let
\beqa
\tilde{G} = \{ (a,b) \in \Delta \times \Delta  \colon a-b \in D_{\beta} \}.
\eeqa
By the estimate above, we have 
\beqa
|\tilde{G}| & \gtrsim & |D_{\beta}| M^{\beta} \\ 
&\gtrsim & {{1 } \over { (C \log M) ^2 } }  M^{2 + \tau - \beta - \sigma } \\
&\gtrsim & M^{2+ \tau - \beta - {3\over 2} \sigma } \\
&\gtrsim & M^{2 - \alpha - {3\over 2} \sigma + \mu} ,
\eeqa
because $\beta < \tau + \alpha - \mu$; this is because we do not have comity $\mu$.  We are essentially done, but we must note that by definition of $D_{\beta}$, a pair 
$(a,b)\in \tilde{G}$ satisfies $|\Delta \cap (a-b -\Delta ) | \gtrsim M^{\beta}$, and the inequality goes in only one direction.  To obtain an additive structure, we want essential equality.  Nevertheless, this can be obtained by a further pigeonholing to find $G' \subseteq \tilde{G}$ such that $|G'| \gtrsim {|\tilde{G}| \over \log M} $.  Call the corresponding difference set $D'$.  The computation immediately above  proves the claim about the height.  Further, this same estimate proves the estimate 
$E(G') \gtrsim M^{2+ \tau - 2\sigma} $ since each pair $(a,b) \in G'$ satisfies $| \Delta \cap (a-b + \Delta ) | \gtrsim M^{\beta}$.
%(See the discussion immediately before proposition \ref{nottoopopular}.)
\end{proof}

%%%%%%%%%%%%%%%%%%%%%%%%%%%%%%%%%%%%%%%%%%%%%%%%%%%%%%%%%%%%%%%%%%%

\section{Sideways comity}  \label{sidewayscomitysection}

%%%%%%%%%%%%%%%%%%%%%%%%%%%%%%%%%%%%%%%%%%%%%%%%%%%%%%%%%%%%%%%%%%%
The goal of this section is to introduce the notion of \it sideways comity\rm and to prove Lemma \ref{SWcomitytonight}, which tells us that an additive structure with good comity either also has good sideways comity, or the set $\Delta$ admits an additive structure at lower height.

For the following discussion assume we have an additive structure $(G,D)$ at height $\alpha$ with comity $\mu$.  
We now give the second of our key definitions which is for another comity-like notion.  First, define 
\beqa
\calf _x = \{ y \in D \colon (x,y) \in \calp \},  
\eeqa
where $\calp$ is the set of pairs $(x,y)$ such that $|\Delta[x] \cap \Delta [y]| $ is large (close to $M^{\tau + \alpha - \mu}$), as in the definition of comity, Definition \ref{comitydef}.
We know that 
\beqa
|\calf_x | \lesssim M^{1-\alpha + \mu }
\eeqa
because otherwise there exists $a\in \Delta [x] $ such that $a$ participates in many  more than $M^{1 + \tau} $ quadruples (and we have no such $a$, by assumption in Theorem \ref{specstruct}).  
For each $x\in D$ and $a\in \Delta [x]$, define the related sets
\beqa
F_{x,a} &=& \{ b \in \Delta \colon b-a \in \calf _x \} \\
F_a &=& \{ b \in \Delta \colon b-a \in D \} .
\eeqa
The sets $F_{x,a}$ will be more important to us, but consider for a moment the sum
\beqa
\sum _{x\in D} \sum _{b\in \Delta } | F_b \cap \Delta [x]|.  
\eeqa
It is straightforward to show that this is $\gtrsim {1 \over \log M} M^{3-2\alpha}$ by interchanging the sums, just as with 
\beqa
\sum_{x\in D} \sum _{y\in D} |\Delta[x] \cap \Delta [y] |.
\eeqa
In fact, we can prove the following slightly refined estimate, with nothing more than interchanging sums:
\beqa
\sum _{x\in D} \sum _{b\in\Delta } | \Delta [x] \cap F_{x,b} | 
&=& \sum _{x\in D} \sum _{b\in\Delta } \sum _c \one _{\Delta [x]} (c) 
		\one _{b+\calf _x } (c) \\
&=& \sum _{x\in D}  \sum _c \one _{\Delta [x]} (c) 
	\sum _{y\in\calf_x} \one _{\Delta [y] } (c) \\
	& \gtrsim & {1 \over \log M} M^{3-2\alpha} 
\eeqa
since the second-to-last display is equal to the sum in estimate \ref{needsaname} above, which can be seen by interchanging the sums.  
By pigeonholing, we can find $\gamma $  and $\calq \subseteq D \times \Delta $ such that 
\beqan \label{secondsum}
\sum _{(x,b) \in \calq} | \Delta [x] \cap F_{x,b} | 
\gtrsim {1 \over (\log M)^2 } M^{3-2\alpha} 
\eeqan
and such that 
\beqa
| \Delta [x] \cap F_{x,b} |  \sim M^{\gamma}
\eeqa
for $(x,b) \in \calq$.  Note that this implies $|\calq| \gtrsim {1 \over (\log M)^2 } M^{3-2\alpha - \gamma} $.  All of this discussion motivates the following definition.
\begin{definition}
We say an additive structure $(G,D)$ at height $\alpha$ with comity $\mu$ has \it sideways comity \rm $\nu$ if there exist
$\gamma \geq \tau+\alpha - \nu $  and $\calq \subseteq D \times \Delta $ such that 
$| \Delta [x] \cap F_{x,b} |  \sim M^{\gamma}$ for $(x,b) \in \calq$, 
and such that 
$|\calq| \gtrsim {1 \over (\log M)^2 } M^{3-2\alpha - \gamma}$.
\end{definition}
The computation above proves the following:
\begin{proposition}\label{SWcomityexists}
For any additive structure $(G,D)$ at height $\alpha $ and comity $\mu$, there exists $\nu >0$ such that $(G,D)$ has sideways comity $\nu$.
\end{proposition}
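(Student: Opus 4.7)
The plan is to adapt directly the computation displayed just before the statement of the proposition. We already saw, by interchanging the order of summation and invoking the lower bound on $\sum_{x\in D}\sum_{y\in D}|\Delta[x]\cap\Delta[y]|$ that underlies estimate \ref{needsaname}, that
$$\sum _{x\in D} \sum _{b\in\Delta } | \Delta [x] \cap F_{x,b} | \gtrsim {1 \over \log M} M^{3-2\alpha}.$$
This is exactly the mass estimate we need to pigeonhole, so no further additive input is required; the work is purely organizational.

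Next I would perform a standard dyadic pigeonhole on the value of $|\Delta[x] \cap F_{x,b}|$. Each such quantity is an integer between $0$ and $|\Delta| = M$, so there are only $O(\log M)$ dyadic scales in play. Pigeonholing on this scale allows me to fix a single value $\gamma$ and a subset $\calq \subseteq D \times \Delta$ on which $|\Delta[x]\cap F_{x,b}| \sim M^\gamma$, and which together retain at least a $1/\log M$ fraction of the total mass. This yields the two inequalities required by the definition of sideways comity: the mass bound $\sum_{(x,b)\in\calq}|\Delta[x]\cap F_{x,b}| \gtrsim M^{3-2\alpha}/(\log M)^2$ and hence (dividing through by $M^\gamma$) the cardinality bound $|\calq| \gtrsim M^{3-2\alpha-\gamma}/(\log M)^2$.

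It remains only to set $\nu := \tau + \alpha - \gamma$ and check that $\nu \geq 0$, so that $\gamma \geq \tau + \alpha - \nu$ as required by the definition. This is automatic: since $F_{x,b}\subseteq \Delta$, we have $|\Delta[x]\cap F_{x,b}| \leq |\Delta[x]| \sim M^{\tau+\alpha}$, which forces $\gamma \leq \tau+\alpha$ and hence $\nu \geq 0$. With this choice of $\nu$, the pair $(\gamma, \calq)$ produced by the pigeonhole step literally witnesses sideways comity $\nu$.

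There is essentially no obstacle here; the proposition is a definitional restatement of the displayed identity, with a single dyadic pigeonhole providing the uniform exponent $\gamma$. The only substantive content, the lower bound on the iterated sum, was already established by swapping the order of summation and invoking the hypothesis that $(G,D)$ has comity $\mu$.
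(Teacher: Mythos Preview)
Your proposal is correct and follows exactly the approach the paper takes: the paper simply says ``The computation above proves the following'' and that computation is precisely the interchange of sums yielding $\sum_{x,b}|\Delta[x]\cap F_{x,b}|\gtrsim M^{3-2\alpha}/\log M$, followed by a dyadic pigeonhole to extract $\gamma$ and $\calq$. The only microscopic quibble is that setting $\nu=\tau+\alpha-\gamma$ gives $\nu\geq 0$ rather than $\nu>0$, but since sideways comity is monotone in $\nu$ (just as the paper remarks for comity), any positive $\nu\geq\tau+\alpha-\gamma$ works, so this is immaterial.
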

Note that 
$|\Delta [x] | \sim M^{\tau+\alpha}$; hence by the definition, having sideways comity $\nu$ requires 
\beqa
M^{\tau + \alpha } \sim |\Delta [x] | \lesssim |F_{x,b}| \sim N^{1-\alpha + \mu};
\eeqa
in other words, we need $\alpha \leq {1- \tau \over 2} + O(\nu + \mu )$.  Fortunately this is guaranteed by Proposition \ref{pigeonhole}.
\begin{lemma}\label{SWcomitytonight}
Suppose $( G, D)$ is an additive structure at height $\alpha $ with comity $\mu$ such that $E(G) \gtrsim  M^{2+ \tau}$.  Then either the structure has sideways comity $\nu$ or there exists an additive structure 
$( G' , D' ) $ of height $\leq \alpha + \mu - {\nu \over 2}$ such that $E(G') \gtrsim E(G) M^{-O(\mu)}$.  
\end{lemma}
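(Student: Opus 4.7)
The strategy is to mimic Lemma \ref{comitytonight} structurally, working with $\calq \subseteq D \times \Delta$ in place of $\calp \subseteq D \times D$. The essential substitution is that the comity $\mu$ hypothesis — specifically the bound $|\calf_x| \lesssim M^{1-\alpha+\mu}$ — will play the role that the smoothing hypothesis played in Lemma \ref{comitytonight}, controlling the relevant additive energy.

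Assume the structure does not have sideways comity $\nu$, so the exponent $\gamma$ supplied by Proposition \ref{SWcomityexists} satisfies $\gamma < \tau + \alpha - \nu$; consequently $|\calq| \gtrsim M^{3 - \tau - 3\alpha + \nu}/(\log M)^2$. The first step is to associate to each triple counted by $T = \sum_{(x,b)\in\calq} |\Delta[x] \cap F_{x,b}|$ a rich element of the same set $D_\beta := \{z \in \Delta - \Delta : |\Delta[z]| \geq M^\beta\}$ used in the comity lemma. For $(x,b) \in \calq$ and $a \in \Delta[x] \cap F_{x,b}$, setting $v := a - b \in \calf_x$ gives $(x,v) \in \calp$, so by the comity hypothesis the set $\Delta[x] \cap \Delta[v]$ has size $\sim M^\beta$ with $\beta \geq \tau + \alpha - \mu$. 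Each $a' \in \Delta[x] \cap \Delta[v]$ contributes a pair $(a'-x, a'-v) \in \Delta \times \Delta$ of difference $v - x$, so $v - x \in D_\beta$. Thus the map $(x,b,a) \mapsto (a-b) - x$ sends the $T$-triples into $D_\beta$, exactly as in the comity setting.

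The heart of the argument is the Cauchy--Schwarz that converts the no-sideways-comity hypothesis into a lower bound on $|D_\beta|$. The fiber of $z \in D_\beta$ under the above map has size bounded by $|S_z| \cdot M^\beta$, where $S_z := \{x \in D : (x, x+z) \in \calp\}$. A Cauchy--Schwarz on the fiber decomposition yields $T^2 \leq |D_\beta| \cdot M^{2\beta} \cdot \sum_z |S_z|^2$, and the critical estimate is that $\sum_z |S_z|^2 \lesssim |\calp| \cdot M^{1-\alpha+\mu}$ — this is exactly where the comity bound $|\calf_x| \lesssim M^{1-\alpha+\mu}$ replaces the smoothing bound on $E_4(D)$ that is unavailable here. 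Combining with the lower bound $T \gtrsim M^{3-2\alpha}/(\log M)^2$ and tracking the gain of $M^\nu$ that the no-sideways-comity hypothesis deposits inside the square, one arrives at $|D_\beta| \gtrsim M^{2+\tau-2\beta+\nu-O(\mu)}/(\log M)^{O(1)}$, an improvement over the comity-case bound by a factor of $M^{\nu - O(\mu)}$.

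The endgame is identical to that of Lemma \ref{comitytonight}. The graph $\tilde G := \{(a,b) \in \Delta \times \Delta : a - b \in D_\beta\}$ satisfies $|\tilde G| \geq |D_\beta| \cdot M^\beta \gtrsim M^{2 - \alpha + \nu/2 - O(\mu)}$; the $\nu/2$ rather than $\nu$ is exactly the square root produced by Cauchy--Schwarz. A final pigeonhole on the difference multiplicity $|\Delta[a-b]|$ over $\log M$ scales extracts a subset $G' \subseteq \tilde G$ forming an additive structure of height $\leq \alpha + \mu - \nu/2 + O(\mu)$, and the energy bound $E(G') \gtrsim E(G) \cdot M^{-O(\mu)}$ follows because each $z \in D'$ carries multiplicity at least $M^\beta \geq M^{\tau + \alpha - \mu}$. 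The main obstacle is the bookkeeping in the Cauchy--Schwarz step: one must apply it so that the comity bound on $|\calf_x|$ controls the energy $\sum_z |S_z|^2$ without absorbing the $M^\nu$ gain, and the precise appearance of the factor $\tfrac{1}{2}$ in the height improvement is the most delicate feature, arising from the square root of a lower bound on $|\calq|$ that itself grows by $M^\nu$ under the hypothesis.
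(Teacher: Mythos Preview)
Your approach has a genuine gap: the map $(x,b,a)\mapsto (a-b)-x$ factors through $(x,v)\in\calp$ with $v=a-b$, and so the quantity you end up bounding below is $|D_\beta|$, exactly the target of Lemma~\ref{comitytonight}. The parameter $\gamma$ (and hence $\nu$) never enters. Indeed $T=\sum_{(x,b)\in\calq}|\Delta[x]\cap F_{x,b}|\sim |\calq|\cdot M^\gamma \sim M^{3-2\alpha}$ is independent of $\gamma$, so the ``gain of $M^\nu$ inside the square'' you refer to is not there. Working through your Cauchy--Schwarz (granting the $\sum_z|S_z|^2$ bound) yields only $|D_\beta|\gtrsim M^{2-\alpha-\beta-\mu}$ and $|\tilde G|\gtrsim M^{2-\alpha-O(\mu)}$, i.e.\ height $\le\alpha+O(\mu)$ with no $\nu$ improvement. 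Separately, the estimate $\sum_z|S_z|^2\lesssim|\calp|\,M^{1-\alpha+\mu}$ is not justified: the bound $|\calf_x|\lesssim M^{1-\alpha+\mu}$ controls the degree of each $x$ in $\calp$, not $|S_z|=|\{x:(x,x+z)\in\calp\}|$, and these are different quantities.

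The paper's proof is structurally different. Rather than aiming at $D_\beta$, it shows (Claim inside the proof) that for each $(x,b)\in\calq$ there is a subset $\Delta_{x,b}\subseteq\Delta[x]$ of size $\gtrsim M^{\tau+\alpha-\mu}$ such that every $a\in\Delta_{x,b}$ has $|\Delta\cap(a+b-\Delta)|\gtrsim M^{\gamma-\mu}$; i.e.\ the \emph{sums} $a+b$ are rich at level $\gamma-\mu$. The new graph is $\tilde G=\{(b,a):a\in\bigcup_{x\in K_b}\Delta_{x,b}\}$, and a Cauchy--Schwarz using the bound $\sum_{x\in D}\one_{\Delta[x]}(a)\lesssim M^{1-\alpha}$ controls the overlap of the $\Delta_{x,b}$, giving $|\tilde G|\gtrsim M^{2+\tau-\gamma}$. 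The $\nu$-gain then comes directly from $\gamma<\tau+\alpha-\nu$, yielding height $<\alpha-\nu+\mu$ (stronger than the stated $\alpha+\mu-\nu/2$). The key point you are missing is that the target richness level must be $\gamma-\mu$, not $\beta$, so that the size of the new graph is governed by $|\calq|\gtrsim M^{3-2\alpha-\gamma}$, which genuinely grows under the no-sideways-comity hypothesis.
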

We remark that in our application $\mu$ will be much smaller than $\nu$, so the height will decrease by essentially $\nu$.  
%Before we prove this lemma, let's see what we can do with sideways comity.  
%
%So now we have seen that sideways comity (together with comity) is enough to ensure substantial structure in the spectrum.  It remains to show that without sideways comity we can find an additive structure of lower height, which is the content of Lemma \ref{SWcomitytonight}.  We now turn our attention to its proof. 
\begin{proof} [Proof of Lemma \ref{SWcomitytonight}]
We begin by considering a pair $(x,b)\in \calq$.  Recall that for each such pair we have 
\beqan \label{condition}
| \Delta [x] \cap F_{x,b} | \sim M^{\gamma }.
\eeqan
We now prove the following claim:
\begin{claim}\label{sums}
For each  $(x,b)\in \calq$, we have 
\beqa
| \{ a \in \Delta [x] \colon | (a+b - \Delta ) \cap \Delta | 
		\gtrsim M^{\gamma - \mu } \} |  
				\gtrsim |\Delta [x] |  M^{-\mu}.
\eeqa
\end{claim}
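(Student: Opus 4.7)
The plan is to produce, for most $a \in \Delta[x]$, many representations of $a+b$ as a sum of two elements of $\Delta$; by definition,
\[
|(a+b-\Delta) \cap \Delta| \;=\; |\{(e,f) \in \Delta^2 : e+f = a+b\}|.
\]
I would construct these representations from the pairs supplied by comity, exploiting the hypothesis $(x,b) \in \calq$.

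Set $A := \Delta[x] \cap F_{x,b}$, so $|A| \sim M^\gamma$. For each $a' \in A$, the condition $a' \in F_{x,b}$ gives $a' - b \in \calf_x$, hence $(x, a'-b) \in \calp$, and by the definition of comity,
\[
|\Delta[x] \cap \Delta[a'-b]| \sim M^\beta \;\gtrsim\; M^{\tau + \alpha - \mu}.
\]
Any $a \in \Delta[x] \cap \Delta[a'-b]$ satisfies $a - (a'-b) \in \Delta$, so $(e,f) := (a',\, a+b-a') \in \Delta \times \Delta$ is a valid representation of $a+b$. Defining
\[
m_a := |\{a' \in A : a+b-a' \in \Delta\}|,
\]
distinct $a'$ give distinct representations (they differ in the first coordinate $e=a'$), hence $|(a+b-\Delta) \cap \Delta| \geq m_a$ for every $a \in \Delta[x]$.

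Switching the order of summation and invoking the comity bound,
\[
\sum_{a \in \Delta[x]} m_a \;=\; \sum_{a' \in A} |\Delta[x] \cap \Delta[a'-b]| \;\gtrsim\; |A| \cdot M^{\tau+\alpha-\mu} \;\gtrsim\; M^{\gamma + \tau + \alpha - \mu}.
\]
Together with the crude upper bound $m_a \leq |A| \lesssim M^\gamma$, a standard pigeonhole argument --- if $N$ denotes the number of $a \in \Delta[x]$ with $m_a \geq T$, then $\sum_a m_a \leq N M^\gamma + |\Delta[x]| T$ --- gives, for $T$ a sufficiently small constant multiple of $M^{\gamma-\mu}$, that $N \gtrsim M^{\tau+\alpha-\mu} = |\Delta[x]| M^{-\mu}$, which is exactly the claim.

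No step is hard individually. The main subtlety is that the pigeonhole is tight: the average of $m_a$ over $a \in \Delta[x]$ is already $\sim M^{\gamma-\mu}$, only a factor $M^\mu$ below the crude upper bound, so the argument succeeds with a small constant loss that can be absorbed if necessary (for instance, by running the whole argument with comity parameter $\mu/2$ in place of $\mu$).
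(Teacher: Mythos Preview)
Your argument is correct and essentially identical to the paper's own proof: the paper also takes the $M^\gamma$ elements $c\in\Delta[x]\cap F_{x,b}$ (your $a'$), uses $\mu$-comity at each $y=c-b\in\calf_x$ to produce $\gtrsim M^{\gamma+\tau+\alpha-\mu}$ quadruples $a+b=c+d$ with $a\in\Delta[x]$, notes that no $a$ occurs more than $M^\gamma$ times, and pigeonholes. Your write-up of the pigeonhole step is slightly more explicit, but the idea and the bounds coincide line for line.
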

\begin{proof} [Proof of Claim]
The condition \eqref{condition} immediately above tells us there are $M^{\gamma}$ many $c\in \Delta [x] $ such that $ c-b =y $ for some $y\in \calf _x $.  For each such $c$, there are at least $ M^{\tau+\alpha -\mu }$ many $a\in \Delta [x] $ such that 
$a-y\in \Delta $ (because $y$ is in $\calf _x$, and because we have $\mu$-comity).  Summing over $c \in \Delta [x]$ gives us $M^{\gamma + \tau + \alpha - \mu }$ quadruples $a+b = c+d$ with $a\in \Delta [x]$, $b$ fixed, $c,d \in \Delta$, and any given $a\in \Delta [x]$ appearing no more than $M^{\gamma }$ times.  Hence there is a set $\Delta _{x,b} \subseteq \Delta [x] $ of size $M^{\tau+\alpha - \mu }$ such that 
\beqa
|\Delta \cap (a+b - \Delta )| \gtrsim M^{\gamma - \mu }
\eeqa
for each $a \in \Delta _{x,b}$.  This is precisely what we claimed.  
\end{proof}
It remains to construct the graph $G'$ claimed in the lemma.  The set $D'$ will be contained in the set of differences $x$ such that 
$|\Delta \cap ( x+ \Delta ) | \gtrsim M^{\gamma - \mu }$, with an application of the pigeonhole principle required again, as in 
Lemma \ref{comitytonight}.  It is worth noting that we will actually show that there are lots of pairs whose \it sum \rm is in $D'$; by symmetry of $\Delta$ we can conclude that the are the same number of pairs whose \it difference \rm is in $D'$.  
For $b\in \Delta $ define 
\beqa
K_b = \{ x \colon (x,b) \in \calq \}.
\eeqa
We know from the definition of $\calq$ that 
\beqan \label{Ksizes}
\sum _b |K_b| \gtrsim |\calq | \gtrsim {1 \over (\log M)^2 } M^{3-2\alpha - \gamma }.
\eeqan
Because of the claim, we know that every $a \in \cup _{x \in K_b} \Delta _{x,b} $ satisfies 
\beqa
|\Delta \cap (a+b - \Delta )| \gtrsim M^{\gamma - \mu }.
\eeqa
Our goal is to show
\beqan \label{bigarea}
\left| \bigcup _{x \in K_b} \Delta _{x,b} \right| 
		\gtrsim |K_b | M^{\tau -1 + 2 \alpha  }.
\eeqan
Assuming \eqref{bigarea}, we are finally ready to define the graph $G'$ claimed in the statement of the lemma.  Let $\tilde{G}$ be the set of all pairs $(b,a) \in \Delta \times \Delta $ such that 
\beqa
 a\in \bigcup _{x \in K_b} \Delta _{x,b} .
\eeqa
Hence by estimates \eqref{bigarea} and \eqref{Ksizes}, we have 
\beqa
|\tilde{G}| &=& \sum _{b\in\Delta}  \left| \bigcup _{x \in K_b} \Delta _{x,b}  \right| \\
		& \gtrsim & \sum _{b\in\Delta} |K_b | M^{\tau - 1 + 2 \alpha  }  \\
		& \gtrsim & M^{2 + \tau - \gamma  } {1 \over (\log M)^2 } .		
\eeqa
Once again, we pigeonhole to obtain $G' \subseteq \tilde{G}$ with $|\Delta \cap (a-b + \Delta)|$ 
essentially constant when $(a,b) \in G'$. 
If the additive structure we started with has sideways comity $\nu$, then we are done; so assume not.  
This means $\gamma < \tau+ \alpha - \nu $.  Hence 
$2 + \tau - \gamma - \mu > 2- \alpha  + \nu - \mu $.  This implies that the height of 
$( G', D')$ is less than $\alpha - \nu + \mu$, which finishes the proof modulo the estimate \eqref{bigarea}.  We prove 
\eqref{bigarea} now using Cauchy-Schwarz:
\beqa
\sum _{x\in K_b} |\Delta _{x,b}| &=&
\sum _{ a\in \bigcup _{x \in K_b} \Delta _{x,b} } \sum _{x\in K_b } 
	\one _{\Delta _{x,b}} (a)  \\
& \leq & \sqrt{ |\bigcup _{x \in K_b} \Delta _{x,b} | }
	\sqrt{ \sum _a ( \sum _{x\in K_b} \one _{\Delta [x]} (a) ) ^2  }
\eeqa
We have already noted that no $a\in\Delta$ participates in more than $N^{1-\alpha }$ differences in $D$;
 i.e.,  $ \sum _{x\in K_b} \one _{\Delta [x]} (a) \lesssim N^{1-\alpha } $.  Hence the right side of the last display is less than
\beqa
 \sqrt{ | \bigcup _{x \in K_b} \Delta _{x,b} | }
\sqrt{ N^{1-\alpha  } \sum _{x\in K_b} |\Delta _{x,b}| }.
\eeqa
Rearranging terms and noting that $ \sum _{x\in K_b} |\Delta _{x,b}|  \sim |K_b| M^{\tau +\alpha} $ proves the estimate \eqref{bigarea}.  
\end{proof}

%%%%%%%%%%%%%%%%%%%%%%%%%%%%%%%%%%%%%%%%%%%%%%%%%%%%%%%%%%%%%%%%%%%%

\section{Finding structure with comity and sideways comity} \label{structuresection}

%%%%%%%%%%%%%%%%%%%%%%%%%%%%%%%%%%%%%%%%%%%%%%%%%%%%%%%%%%%%%%%%%%%%
The goal of this section is to show that when an additive structure has small comity and small sideways comity, we can find substantial additive structure in the set $\Delta$.  Precisely, we have:  

\begin{lemma} \label{sideways}
Fix $\tau > 0$.
There exists a function $f _{\tau} \colon (0, 1 ) \rightarrow (0, \infty) $ with $f_{\tau}(\eta ) \rightarrow 0$ as $\eta \rightarrow 0 $ such that the following holds.
Suppose an additive structure at height $\alpha$ has comity $\mu$, sideways comity $\nu$, $\alpha \leq {1-\tau \over 2}$,  and $E(G) \gtrsim  M^{2 + \tau}$.  Then there exists a set $H$ with 
$|H| \lesssim M^{\tau+ \alpha + f_{\tau}(\mu + \nu) }$ and 
$X$ with $|X| \lesssim M^{1 - \tau -2\alpha + f_{\tau} (\mu + \nu)  } $ such that 
\beqa
|H - H| \lesssim |H| ^{1+ f_{\tau}(\mu + \nu)}
\eeqa
\beqa
|(X+H) \cap \Delta | \gtrsim M^{1-\alpha - f_{\tau} (\mu + \nu)  }.
\eeqa
\end{lemma}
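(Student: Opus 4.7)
My plan is to apply the asymmetric Balog--Szemer\'edi--Gowers theorem (Lemma \ref{absg}) to a bipartite structure harvested from the combined comity and sideways comity data, producing a candidate $H$ with small doubling; a pigeonhole argument on chunks of the resulting covering set $X$ then trims $|X|$ to the advertised size. The starting move is to pigeonhole over $x\in D$ (using Proposition \ref{SWcomityexists} together with the averaging $|\calq|\gtrsim M^{3-2\alpha-\gamma}/(\log M)^{O(1)}$) and fix a ``good'' $x_0$ for which $|\calf_{x_0}|$ is close to the bound $M^{1-\alpha+\mu}$ and for which $B_{x_0}:=\{b\in\Delta:(x_0,b)\in\calq\}$ has $|B_{x_0}|\gtrsim M^{1-\alpha+\nu}/(\log M)^{O(1)}$. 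I would then set $C:=\Delta[x_0]$, so $|C|\sim M^{\tau+\alpha}$.

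Next I would invoke the $\mu$-comity and $\nu$-sideways comity at $x_0$ to count additive quadruples $a+b=c+d$ where $a,c\in C$, $b\in B_{x_0}$, and $d\in\Delta$. For each $b\in B_{x_0}$ there are $\sim M^{\gamma}$ choices of $c\in\Delta[x_0]\cap F_{x_0,b}$; for each such $c$, the element $y:=c-b$ lies in $\calf_{x_0}$, so $\mu$-comity furnishes $\gtrsim M^{\tau+\alpha-\mu}$ choices of $a\in\Delta[x_0]\cap\Delta[y]$, and $d:=a-y$ is automatically in $\Delta$. Summing (and using $\gamma\geq\tau+\alpha-\nu$) produces $\gtrsim M^{1+2\tau+\alpha-O(\mu+\nu)}$ ordered quadruples of the form required by Lemma \ref{absg} with $b_1,b_2\in\Delta$ and $c_1,c_2\in\Delta[x_0]$. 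I would then apply Lemma \ref{absg} with $B:=\Delta$ and $C=\Delta[x_0]$; reading off the input parameter $\eta$ from the matching of our count against $|B|^{1-\eta}|C|^2=M^{1+2\tau+2\alpha-\eta}$, and using the hypothesis $\alpha\leq(1-\tau)/2$ to absorb the apparent shortfall of $M^{\alpha}$ into slack in the $\eta$-threshold, one obtains $\eta=O(\mu+\nu)$. The lemma then outputs $K$ with $|K-K|\lesssim|K|^{1+O(\mu+\nu)}$ and $|K|\sim M^{\tau+\alpha+O(\mu+\nu)}$, together with $X'$ satisfying $|X'|\lesssim M^{1-\tau-\alpha+O(\mu+\nu)}$ and $|\Delta\cap(X'+K)|\gtrsim M^{1-O(\mu+\nu)}$.

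Finally, since this $X'$ is a factor of $M^{\alpha}$ larger than the target, I would partition $X'$ into $\sim M^{\alpha}$ blocks of size $M^{1-\tau-2\alpha+O(\mu+\nu)}$ and pigeonhole: at least one block, which I rename $X$, must satisfy $|(X+K)\cap\Delta|\gtrsim M^{1-\alpha-O(\mu+\nu)}$. To enforce symmetry I would set $H:=K\cup(-K)$ and replace $X$ by $X\cup(-X)$, which affects sizes, doubling, and the covering bound only up to absolute constants, so the advertised estimates follow with a function $f_\tau$ built from the $\mu\to 0$ dependence in Lemma \ref{absg} together with the accumulated polylog factors. The principal obstacle is the bookkeeping that shows the $\eta$-input to Lemma \ref{absg} really depends only on $\mu+\nu$: this uses $\alpha\leq(1-\tau)/2$ in an essential way to convert excess structured $4$-tuples into slack in the $|B|^{1-\eta}|C|^2$ threshold, and requires careful tracking of multiplicities when identifying structured $5$-tuples $(x_0,b,c,a)$ with distinct ordered quadruples.
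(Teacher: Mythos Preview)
Your argument has a genuine gap at the Balog--Szemer\'edi--Gowers step. You apply Lemma \ref{absg} with $B=\Delta$ and $C=\Delta[x_0]$, so $|B|=M$ and $|C|\sim M^{\tau+\alpha}$, and your quadruple count is $\gtrsim M^{1+2\tau+\alpha-O(\mu+\nu)}$. Matching this against the threshold $|B|^{1-\eta}|C|^2=M^{1+2\tau+2\alpha-\eta}$ forces $\eta\ge\alpha+O(\mu+\nu)$, \emph{not} $\eta=O(\mu+\nu)$. The hypothesis $\alpha\le(1-\tau)/2$ does not ``absorb'' this shortfall: $\alpha$ is a fixed parameter which can be as large as $(1-\tau)/2$, and the output function $\mu(\eta,|B|/|C|)$ in Lemma \ref{absg} only tends to $0$ with $\eta$, so with $\eta\approx\alpha$ you get no control that improves as $\mu+\nu\to 0$. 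In short, your $f_\tau$ would depend on $\alpha$, not just on $\mu+\nu$.

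The paper avoids this by taking $B$ to be a much smaller set. It first proves, via Cauchy--Schwarz, the inequality
\[
\sum_{b\in\Delta}|\Delta[x]\cap F_{x,b}|^2 \;\le\; \sum_{a\in\Delta[x]} \sqrt{|\calf_x|\,E(\Delta[x],F_{x,a})},
\]
then sums over $x$ and pigeonholes to a single pair $(x,a)$ with $E(\Delta[x],F_{x,a})\gtrsim M^{1+2\tau+\alpha-O(\mu+\nu)}$. Now BSG is applied with $B=F_{x,a}$ and $C=\Delta[x]$; since $|F_{x,a}|\le|\calf_x|\lesssim M^{1-\alpha+\mu}$, the threshold is $|B|^{1-\eta}|C|^2\sim M^{1+2\tau+\alpha+O(\mu)}$, which matches the energy with $\eta=O(\mu+\nu)$. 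This directly yields $|X|\lesssim|B|/|C|\sim M^{1-\tau-2\alpha+O(\mu+\nu)}$ and $|(X+K)\cap F_{x,a}|\gtrsim M^{1-\alpha-O(\mu+\nu)}$, with no need for your final block--pigeonhole on $X'$. The role of $\alpha\le(1-\tau)/2$ in the paper is only to ensure $|C|\lesssim|B|$, i.e.\ that the pair $(B,C)$ is correctly ordered for the asymmetric BSG lemma. To repair your argument you would need to replace $B=\Delta$ by a set of size $\sim M^{1-\alpha}$ containing both endpoints $b,d$ of your quadruples; the paper's localization to a fixed $a$ (so that $F_{x,a}$ plays this role) is precisely the missing idea.
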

The assumption $\alpha \leq {1-\tau \over 2}$ will be valid when we use this lemma because of Proposition \ref{pigeonhole}.
We remark that the functions $f_{\tau}$ come from the asymmetric Balog-Szemeredi-Gowers theorem.  The dependence on $\tau$ comes in the ratio of the sizes of the sets $B,C$ in the statement of that theorem.  The dependence on $\mu$ and $\nu$ enter into the parameter $\eta$ in the statement of that theorem.  
%Of course this is exactly the conclusion we want:  we iterate Lemmas \ref{comitytonight} and \ref{SWcomitytonight} until we have the desired comity and sideways comity.  At that point, apply the second lemma above to obtain structure.  We first prove Lemma \ref{sideways}.

The proof of this lemma follows the idea of the proof of Lemma 6.9 in \cite{BK}.
Define
\beqa
E(A,B) = | \{ (a,b,c,d) \in A\times B\times A \times B \colon a-b=c-d \} |.
\eeqa
We will use sideways comity to obtain estimates on the quantity 
$E(\Delta [x] , F_{x,a} )$ for a typical $x\in D$ and $a\in \Delta [x]$. We show that, on average, this energy is nearly maximal; then the asymmetric Balog-Szemeredi-Gowers lemma allows us to conclude that $F_{x,a}$ is the union of translates of an almost additively closed set.
The key estimate is the following, which holds for any $x\in D$:
\begin{claim}
\beqa
  \sum _{a\in\Delta [x] }
		 \sqrt{ |\calf _x| E(\Delta [x] , F_{x,a} )  }
\geq
  \sum _{b \in \Delta }  | \Delta [x] \cap F_{x,b} | ^2
\eeqa
\end{claim}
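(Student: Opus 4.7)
The plan is to apply Cauchy--Schwarz \emph{inside} the square root on each term of the left-hand side, against the indicator of $\calf_x$, and then unfold the resulting triple count by successively peeling off the $\calf_x$-memberships $q-a$ and $p-q$ to recover two copies of $|\Delta[x]\cap F_{x,b}|$.

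Concretely, for fixed $a\in\Delta[x]$, write $E(\Delta[x],F_{x,a})=\sum_t r_a(t)^2$ with $r_a(t)=|\{(p,q)\in\Delta[x]\times F_{x,a}\,:\,p-q=t\}|$, and pair the vector $(r_a(t))_t$ with $\mathbf{1}_{\calf_x}$ via Cauchy--Schwarz to obtain
$$\sqrt{|\calf_x|\,E(\Delta[x],F_{x,a})}\;\ge\;\sum_{t\in\calf_x}r_a(t)\;=\;|\{(p,q)\in\Delta[x]\times F_{x,a}\,:\,p-q\in\calf_x\}|.$$
Summing over $a\in\Delta[x]$ and parametrising each triple $(a,p,q)$ by $y_1:=q-a\in\calf_x$ (forcing $q=a+y_1\in\Delta$) and $y_2:=p-q\in\calf_x$ (forcing $p=a+y_1+y_2\in\Delta[x]$), the intermediate vertex $b:=a+y_1=q$ lies in $\Delta$; grouping by this $b$ first, the number of admissible $a$'s becomes $|\{a\in\Delta[x]:b-a\in\calf_x\}|=|\Delta[x]\cap(b-\calf_x)|$ and the number of admissible $y_2$'s becomes $|\{y_2\in\calf_x:b+y_2\in\Delta[x]\}|=|\Delta[x]\cap F_{x,b}|$, producing
$$\sum_{b\in\Delta}|\Delta[x]\cap(b-\calf_x)|\cdot|\Delta[x]\cap F_{x,b}|.$$

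The main obstacle is to identify the first factor with the second, so as to upgrade the product to the square $|\Delta[x]\cap F_{x,b}|^{2}$ appearing on the right. This is arranged by symmetry: since $\Delta=-\Delta$ and the defining relation $|\Delta[x]\cap\Delta[y]|\sim M^{\beta}$ of comity depends on $y$ only through the (symmetric) intersection count, a harmless pigeonhole at the stage of selecting $\calp$ (losing at most a factor $\lesssim\log M$ in $|\calp|$) lets us pass to a sub-$\calp$ for which $\calf_x=-\calf_x$, while keeping the comity parameter $\mu$ in the same regime. Under this symmetry $b-\calf_x=b+\calf_x$, and the two factors coincide, yielding exactly $\sum_{b\in\Delta}|\Delta[x]\cap F_{x,b}|^{2}$. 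If one prefers not to symmetrise, running the same Cauchy--Schwarz once more with $\mathbf{1}_{-\calf_x}$ in place of $\mathbf{1}_{\calf_x}$ produces the mirror bound $\sum_b|\Delta[x]\cap(b-\calf_x)|^{2}$ on the right, and averaging (or combining by one further Cauchy--Schwarz) recovers the stated inequality up to the acceptable slack hidden in the paper's $\sim,\gtrsim$ notation.
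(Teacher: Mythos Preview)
Your approach is essentially the paper's: both arguments reduce to the same triple count and the same Cauchy--Schwarz. The paper starts from the right-hand side, expands the square, drops the constraint $b\in\Delta$, and then applies Cauchy--Schwarz over $y'\in\calf_x$; you start from the left-hand side, apply Cauchy--Schwarz against $\one_{\calf_x}$, and then unfold. These are two readings of the same computation.

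You are right to flag the sign asymmetry: your unfolding yields
\[
\sum_{b\in\Delta}|\Delta[x]\cap(b-\calf_x)|\cdot|\Delta[x]\cap(b+\calf_x)|,
\]
not $\sum_b|\Delta[x]\cap F_{x,b}|^2$. In fact the paper's own proof has the same slip: the set written as $F_{x,c}$ in the penultimate line should be $c-\calf_x$, not $\Delta\cap(c+\calf_x)$; the Cauchy--Schwarz then delivers $\sqrt{|\calf_x|\,E(\Delta[x],\calf_x)}$ rather than $\sqrt{|\calf_x|\,E(\Delta[x],F_{x,c})}$, and since $F_{x,c}\subseteq c+\calf_x$ the inequality goes the wrong way to recover the claim verbatim.

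However, your proposed repairs do not close this gap. Symmetrising $\calf_x$ under $y\mapsto -y$ is not a $\log M$ pigeonhole: the defining condition $|\Delta[x]\cap\Delta[y]|\sim M^\beta$ is \emph{not} invariant under $y\mapsto -y$ (one has $\Delta[-y]=-\Delta[y]$, hence $|\Delta[x]\cap\Delta[-y]|=|\Delta[-x]\cap\Delta[y]|$, which need not agree). And the mirror run with $-\calf_x$ produces $\sum_b|\Delta[x]\cap(b-\calf_x)|^2$, which is a different quantity from $\sum_b|\Delta[x]\cap F_{x,b}|^2$; no averaging or further Cauchy--Schwarz converts one into the other. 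The honest fix is to accept that the argument proves the inequality with $E(\Delta[x],\calf_x)$ in place of $E(\Delta[x],F_{x,a})$ on the left; this is what both proofs actually establish, and it is all that is used downstream in Lemma~\ref{sideways}, since one only needs a large energy between $\Delta[x]$ and some set of size $\lesssim M^{1-\alpha+\mu}$ to feed into the asymmetric Balog--Szemer\'edi--Gowers lemma.
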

\begin{proof}[ Proof of Lemma \ref{sideways} ]
Let's first use the claim to prove the lemma.  We sum the estimate over $x\in D$.
Note that the assumption of sideways comity is exactly what makes the right hand side large for a typical $x\in D$.  Specifically, we have 
\beqa
\sum _{x\in D}   \sum _{a\in\Delta [x] }
		 \sqrt{ |\calf _x| E(\Delta [x] , F_{x,a} )  }
& \geq &
\sum _{x\in D}  \sum _{b \in \Delta }  
		| \Delta [x] \cap F_{x,b} | ^2   \\
&\gtrsim &  { M^{3-2\alpha } M^{\tau+\alpha - \nu }  \over ( \log M )^2  }  \\
& \sim &  { M^{3+ \tau - \alpha - \nu }  \over ( \log M )^2  },
\eeqa
where the last inequality follows from estimate \ref{secondsum} and sideways comity.  Since $|D| \sim M^{2 - \tau -2\alpha}$ and $|\Delta [x]|  \sim M^{\tau+ \alpha }$ for all $x\in D$, we conclude there are $(x,a)$ in $D\times \Delta[x] $ such that 
\beqa
\sqrt{ |\calf _x| E(\Delta [x] , F_{x,a} )  } \gtrsim  M^{1 + \tau -\nu }.
\eeqa
The upper bound $|\calf_x | \lesssim M^{1-\alpha + \mu } $ allows us to conclude 
\beqa
 E(\Delta [x] , F_{x,a} ) & \gtrsim &  M^{1 + 2\tau + \alpha -2\nu -\mu}  \\
	&\sim& M^{\tau+\alpha} M^{\tau+ \alpha} M^{1-\alpha } M^{-O(\mu + \nu )}  \\
	&\sim& |\Delta [x] |^2 |F_{x,a}|  M^{-O(\mu + \nu )} .
\eeqa
We may apply the asymmetric BSG theorem to obtain the desired conclusion, namely that $\Delta [x]$ is essentially an almost additively closed set and $F_{x,a}$ is essentially a bunch of translates of $\Delta [x]$.  We remark that we use here the fact 
$|\Delta [x]| \lesssim |\calf_{x,a}|$, which gives us the right conditions for the asymmetric BSG theorem.  This fact follows from the estimate $M^{\tau + \alpha} \lesssim M^{1-\alpha + \mu}$, which holds because $\alpha \lesssim {1-\tau \over 2}$, by assumption.  This completes the proof of Lemma \ref{sideways} given the claim.
\begin{proof} [Proof of Claim]
Fix $x\in D$.  Then expand the square:
\beqa
&&\sum _{b \in \Delta }  | \Delta [x] \cap F_{x,b} | ^2  \\
&=& 
\sum _{b\in \Delta } \sum _{c\in\Delta [x]} \sum _{d\in \Delta [x] }
			 \one_{\calf _x} (c - b ) \one _{\calf _x } (d-b ) \\
&=&  \sum _{c\in\Delta [x]} \sum _{d\in \Delta [x] } 
	| \{ b\in \Delta \colon c-b \in \calf_x \text{  and  } d-b \in \calf _x \} | \\
& \leq & \sum _{c\in \Delta [x] }  
		| \{ (d, y, y') \in \Delta [x] \times \calf _x \times \calf _x \colon
		c-y = d-y' \} | \\
&=& \sum _{c \in \Delta [x] }  \sum _{y' \in \calf _x} 
			| F_{x,c} \cap (\Delta [x] - y' ) | \\
&\leq & \sum _{c \in \Delta [x] }  \sqrt{  |\calf _x| E (\Delta [x] , F_{x,c} ) }.
\eeqa
The last inequality follows from Cauchy-Schwarz.
This proves the claim, and hence the lemma. 
\end{proof}
\end{proof}

%%%%%%%%%%%%%%%%%%%%%%%%%%%%%%%%%%%%%%%%%%%%%%%%%%%%%%%%%%%%%%%%%%%%%%%%%%%

\section{Iteration} \label{iterationsection}

%%%%%%%%%%%%%%%%%%%%%%%%%%%%%%%%%%%%%%%%%%%%%%%%%%%%%%%%%%%%%%%%%%%%%%%%%%%

In this section we carry out the bookkeeping necessary for iteration of the main lemmas.  
First we iterate Lemma \ref{comitytonight} to get the following:
\begin{lemma} \label{havecomity}
Let $(G,D)$ be an additive structure at height $\alpha $ such that $E(G) \gtrsim M^{2+ \tau}$ 
and such that $E_8 (\Delta ) \lesssim M^{4+3\tau + \sigma}$.  
Then there exists an additive structure $(G',D')$ with comity $\mu = {C \over \log {1\over \sigma} }$ 
and height $\alpha ' \leq \alpha $ with $E(G')\gtrsim E(G) M^{-\mu}$.
\end{lemma}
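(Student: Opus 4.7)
The plan is to iterate Lemma \ref{comitytonight} with a single fixed target parameter $\mu$. Set $(G_0, D_0) = (G, D)$. At each step $k$, apply Lemma \ref{comitytonight} to $(G_k, D_k)$: if it already has comity $\mu$, take $(G', D') = (G_k, D_k)$ and stop; otherwise the lemma produces $(G_{k+1}, D_{k+1})$ at height $\alpha_{k+1} \leq \alpha_k - (\mu - 2\sigma)$ with energy $E(G_{k+1}) \gtrsim M^{-2\sigma} E(G_k)$.

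Before running the iteration I would observe that the height $\alpha$ lives in a bounded interval. Indeed $|G| \leq M^2$ forces $\alpha \geq 0$, and the trivial bound $E(G) \leq |G| \cdot M = M^{3-\alpha}$ combined with $E(G) \gtrsim M^{2+\tau}$ gives $\alpha \leq 1 - \tau$. So provided $\mu > 2\sigma$, the height strictly decreases by at least $\mu - 2\sigma$ per unsuccessful step, and the iteration must terminate within $K := \lceil (1-\tau)/(\mu - 2\sigma) \rceil$ rounds with a structure possessing comity $\mu$. The terminating structure then satisfies $E(G') \gtrsim M^{-2\sigma K} E(G)$, so it only remains to arrange $2\sigma K \leq \mu$.

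With the advertised choice $\mu = C/\log(1/\sigma)$ one has $K \lesssim \log(1/\sigma)/C$, so the condition $2\sigma K \leq \mu$ reduces to $\sigma (\log(1/\sigma))^2 \lesssim C^2$, which holds for all sufficiently small $\sigma$; for $\sigma$ bounded away from $0$ the statement is vacuous and one simply takes $\mu$ to be a suitable absolute constant. The bound $\alpha' \leq \alpha$ in the conclusion is free, since height only ever decreases along the iteration. The main (really only) obstacle is this calibration: the additive loss $\mu - 2\sigma$ in height per step must be large enough, relative to the maximum number $K$ of steps, that the multiplicative energy loss $M^{-2\sigma K}$ stays bounded by $M^{-\mu}$. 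Once this balance is checked, the lemma is pure bookkeeping on top of Lemma \ref{comitytonight}.
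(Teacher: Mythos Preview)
Your approach---iterate Lemma~\ref{comitytonight} until comity $\mu$ is achieved, bounding the number of steps by the total available height---is exactly the paper's, and the bound $K \lesssim 1/\mu$ on the number of rounds is right. The gap is in the per-step energy loss. You write $E(G_{k+1}) \gtrsim M^{-2\sigma} E(G_k)$ with the \emph{original} $\sigma$ at every step, but Lemma~\ref{comitytonight} must be applied with parameters $(\tau_k,\sigma_k)$ satisfying $E(G_k)\gtrsim M^{2+\tau_k}$ and $E_8(\Delta)\lesssim M^{4+3\tau_k+\sigma_k}$ (the proof of that lemma uses $|\Delta[x]|\sim M^{\alpha_k+\tau_k}$, which pins $\tau_k$ to the actual energy of $G_k$). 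Since $E(G_k)$ decreases, $\tau_k$ drops and $\sigma_k=\sigma+3(\tau-\tau_k)$ \emph{grows geometrically}: one gets $\sigma_k\lesssim \sigma\cdot C_0^{\,k}$ for an absolute constant $C_0$. Consequently the cumulative energy loss after $K$ steps is $M^{-O(\sigma C_0^{K})}$, not $M^{-2\sigma K}$; the same correction applies to your height-drop estimate, which is $\mu-2\sigma_k$ at step $k$, not $\mu-2\sigma$.

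This does not kill the argument, but it changes the calibration you need to check. With $\mu=C/\log(1/\sigma)$ and $K\lesssim 1/\mu$ one has $\sigma C_0^{K}\sim \sigma^{\,1-(\log C_0)/C}$, which is indeed $\ll \mu$ once $C$ is taken large relative to $\log C_0$; so the advertised choice of $\mu$ survives, but only after tracking this geometric growth (and checking along the way that $\sigma_k<\mu/2$ so the height really keeps dropping). Your linear estimate $2\sigma K\sim \sigma\log(1/\sigma)$ is a substantial underestimate of the true loss and does not by itself justify the conclusion. The paper's proof records exactly this exponential dependence.
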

\begin{proof}
Apply Lemma \ref{comitytonight} iteratively until we reach comity $\mu$.  Because we lower height by ${\mu \over 2} $ at each iteration, we will achieve comity $\mu$ within $\sim {1\over \mu} $ iterations.  
The energy loss after $k$ iterations is $\lesssim M^{-O(\sigma C^k)}$.  Since $k\lesssim {1\over \mu}$, we have 
$\sigma C^k \leq \sigma C^{1\over \mu} = \sigma C^{{1\over C'} \log {1\over \sigma} } << {C \over \log {1\over \sigma} }$.
This yields a structure $(G',D')$, at height $\leq \alpha$ with comity ${C \over \log {1\over \sigma} }$ and 
$E(G')\gtrsim E(G) M^{-\mu}$.
\end{proof}

Our goal is to find an additive structure on $\Delta $ with comity and sideways comity $\nu ^{\star}$ with $\nu ^{\star}$ tending to zero as the nonsmoothing parameter $\sigma_0$ tends to zero.  We of course also want this additive structure to retain most of the energy of the set $\Delta$.  With this, we can apply Lemma \ref{sideways} to obtain the additive structure we want.

By pigeonholing we can find an additive structure $(G_0, D_0)$ of height $\leq {1-\tau \over 2}$; this is Proposition \ref{pigeonhole}.  We note that the assumption on height is necessary for Lemma \ref{sideways}, and that during this iteration we will only lower the height, so the estimate on height persists.  We then iterate Lemmas \ref{havecomity} and \ref{SWcomitytonight} as follows.

Fix a parameter $\nu ^{\star}$.  This is the sideways comity we want to find.  Now we take $\sigma _0$ small enough that
\beqa
{1\over \nu ^{\star} } \sim \log \log \dots \log {1\over \sigma_{0}},
\eeqa
so that (using notation from below) $\widetilde{\mu _k} << \nu ^{\star}$ whenever $k \lesssim {1\over \nu ^{\star} }$.
The function $f$ in the statement of Theorem \ref{specstruct} is obtained by taking $\nu ^{\star} << \tau _0$ (the reason for this will be apparent at the end of this section), inverting the relationship between $\sigma_0$ and $\nu ^{\star}$, and factoring in the loss from the asymmetric Balog-Szemeredi-Gowers theorem. 

We now define a sequence of additive structures $(G_j, D_j)$ as follows:  given $(G_j, D_j)$ at height $\alpha _j$ with $E(G_j) \gtrsim M^{2+ \tau_j}$ and 
$E_8 (\Delta ) \lesssim M^{4+3\tau _j + \sigma _j}$, 
apply Lemma \ref{havecomity} to find $(\widetilde{G_{j}}, \widetilde{D_{j}})$ 
at height $\widetilde{\alpha _j} \leq \alpha _j$ with comity $\widetilde{\mu _{j} } = {C \over \log {1\over \sigma_j} }$ and 
$E(\widetilde{G_j}) \gtrsim E(G_j) M^{-\widetilde{\mu _j}}$.
If $(\widetilde{G_{j}}, \widetilde{D_{j}})$ has sideways comity $  \leq  \nu ^{\star},$
apply Lemma \ref{sideways} to obtain the desired structure.

If not, then apply Lemma \ref{SWcomitytonight} with $\sigma _{j+1} = C \widetilde{\mu _j}$ to find an additive structure $(G_{j+1} , D_{j+1} )$ of height $\alpha _{j+1} \leq \alpha _j - { \nu ^{\star} \over 2}$, such that 
$E(G_{j+1}) \gtrsim E(G_j) M^{ - \widetilde{\mu _j} }$.  
Since height drops by $ { \nu ^{\star} \over 2}$ at each iteration, we must obtain sideways comity within $\sim {1\over \nu ^{\star} }$ iterations.  Note that our estimate on the height is valid since we arranged that $\widetilde{\mu _k }<< \nu ^{\star}$ for 
$k \lesssim {1\over \nu ^{\star} }$ .

Hence there is $k\lesssim {1\over \nu ^{\star} }$ such that $(\widetilde{G_{k}}, \widetilde{D_{k}})$ has 
comity $\widetilde{\mu _k } \leq \nu ^{\star}$ and sideways comity $\nu^{\star}$, and 
\beqa
E(G_k) \gtrsim E(G_0) M^{-O(\sum _{j=0} ^{k} \widetilde{\mu _j}) } \gtrsim E(G_0) M^{-O(\widetilde{\mu _k})} \gtrsim E(\Delta) M^{-\nu ^{\star}}.
\eeqa
Hence we apply Lemma \ref{sideways} with comity $\nu ^{\star}$, sideways comity $\nu ^{\star}$, and 
$E(G_k) \gtrsim M^{2+\tau_0 - \nu ^{\star}} $.

\bigskip
\bigskip

\tiny

\textsc{M. BATEMAN, DEPARTMENT OF MATHEMATICS, UCLA, LOS ANGELES CA}

{\it bateman@math.ucla.edu}

\bigskip

\textsc{N. KATZ, DEPARTMENT OF MATHEMATICS, INDIANA UNIVERSITY, BLOOMINGTON IN}

{\it nhkatz@indiana.edu}

\end{document}